\newtheorem{thm}{Theorem}
\newtheorem{cor}[thm]{Corollary}
\newtheorem{lem}[thm]{Lemma}
\newtheorem{prop}[thm]{Proposition}
\newtheorem{defn}[thm]{Defintion}
\theoremstyle{definition}
\newtheorem{rem}{Remark}
\newtheorem{examp}{Example}
\newtheorem{notation}{Notation}
\newcommand{\rr}{\mathbb{R}}
\newcommand{\nn}{\mathbb{N}}
\newcommand{\qq}{\mathbb{Q}}
\newcommand{\ee}{\varepsilon}
\newcommand{\con}{\smallfrown}
\newcommand{\lbf}{\mathcal{L}_{\mathbf{f}}}
\newcommand{\llf}{\mathcal{L}_{\mathbf{f},f}}
\newcommand{\kk}{\mathcal{K}}
\newcommand{\ii}{\mathcal{I}}
\newcommand{\jj}{\mathcal{J}}
\newcommand{\fin}{\mathrm{Fin}}
\newcommand{\SB}{\mathbf{\Sigma}}
\newcommand{\PB}{\mathbf{\Pi}}
\newcommand{\bt}{\mathbb{N}^{<\mathbb{N}}}
\newcommand{\ct}{2^{<\mathbb{N}}}
\newcommand{\sg}{\sigma}
\newcommand{\bs}{\mathbf{f}=(f_n)_n}
\newcommand{\tr}{\mathrm{Tr}}
\newcommand{\wf}{\mathrm{WF}}
\newcommand{\bx}{\mathcal{B}(X)}
\newcommand{\ld}{\mathcal{L}_{\mathcal{D}'}}
\begin{document}

\title[Codings of separable compacta]{Codings of separable compact subsets
of the first Baire class}
\author{Pandelis Dodos}
\address{National Technical University of Athens, Faculty of Applied Sciences,
Department of Mathematics, Zografou Campus, 157 80, Athens, Greece}
\email{pdodos@math.ntua.gr}
\maketitle

\footnotetext[1]{2000 \textit{Mathematics Subject Classification}: 03E15, 26A21, 54H05, 05D10.}
\footnotetext[2]{Research supported by a grant of EPEAEK program "Pythagoras".}


\begin{abstract}
Let $X$ be a Polish space and $\kk$ a separable compact subset of the first Baire
class on $X$. For every sequence $\bs$ dense in $\kk$, the descriptive set-theoretic
properties of the set
\[ \lbf=\{ L\in[\nn]: (f_n)_{n\in L} \text{ is pointwise convergent}\} \]
are analyzed. It is shown that if $\kk$ is not first countable,
then $\lbf$ is $\PB^1_1$-complete. This can also happen even if
$\kk$ is a pre-metric compactum of degree at most two, in the sense
of S. Todor\v{c}evi\'{c}. However, if $\kk$ is of degree exactly two,
then $\lbf$ is always Borel. A deep result of G. Debs implies that
$\lbf$ contains a Borel cofinal set and this gives a
tree-representation of $\kk$. We show that classical ordinal assignments of
Baire-1 functions are actually $\PB^1_1$-ranks on $\kk$. We also provide
an example of a $\SB^1_1$ Ramsey-null subset $A$ of $[\nn]$
for which there does not exist a Borel set $B\supseteq A$ such that the
difference $B\setminus A$ is Ramsey-null.
\end{abstract}


\section{Introduction}

Let $X$ be a Polish space. A Rosenthal compact on $X$ is a
subset of real-valued Baire-1 functions on $X$, compact in
the pointwise topology. Standard examples of such compacta
include the Helly space (the space of all non-decreasing
functions from the unit interval into itself), the split
interval (the lexicographical ordered product of the unit
interval and the two-element ordering) and the ball of the
double dual of a separable Banach space not containing $\ell_1$.
That the later space is indeed a compact subset of the first Baire
class follows from the famous Odell-Rosenthal theorem \cite{OR},
which states that the ball of the double dual of a separable
Banach space with the weak* topology consists only of Baire-1
functions if and only if the space does not contain $\ell_1$.
Actually this result motivated H. P. Rosenthal to initiate the
study of compact subsets of the first Baire class in \cite{Ro}.
He showed that all such compacta are sequentially compact.
J. Bourgain, D. H. Fremlin and M. Talagrand proved that Rosenthal
compacta are Fr\'{e}chet spaces \cite{BFT}. We refer to \cite{AGR},
\cite{Pol} and \cite{Ro2} for thorough introductions to the
theory, as well as, its applications in Analysis.

Separability is the crucial property that divides this class in
two. As S. Todor\v{c}evi\'{c} has pointed out in \cite{To}, while
non-separable Rosenthal compacta can be quite pathological, the
separable ones are all "definable". This is supported by the work
of many researchers, including G. Godefroy \cite{Go}, A. Krawczyk
\cite{Kra}, W. Marciszewski \cite{Ma}, R. Pol \cite{Pol2} and is
highlighted in the remarkable dichotomies and trichotomies of
\cite{To}.

Our starting point of view is how we can code separable compact
subsets of the first Baire class by members of a standard
Borel space. Specifically, by a \textit{code} of a separable
Rosenthal compact $\kk$ on a Polish space $X$, we mean a
standard Borel space $C$ and a surjection $C\ni c\mapsto
f_c\in \kk$ such that for all $a\in\rr$ the relation
\[ (c,x)\in R_a \Leftrightarrow f_c(x)>a \]
is Borel in $C\times X$. In other words, inverse images of
sub-basic open subsets of $\kk$ are Borel in $C$ uniformly in $X$.

There is a natural object one associates to every separable
Rosenthal compact $\kk$ and can serve as a coding of $\kk$.
More precisely, for every dense sequence $\bs$ in $\kk$ one defines
\[ \lbf=\{ L\in[\nn]: (f_n)_{n\in L} \text{ is pointwise convergent} \}. \]
The Bourgain-Fremlin-Talagrand theorem \cite{BFT} implies
that $\lbf$ totally describes the members of $\kk$, in the
sense that for every accumulation point $f$ of $\kk$ there
exists $L\in \lbf$ such that $f$ is the pointwise limit of
the sequence $(f_n)_{n\in L}$. Moreover, for every $f\in\kk$
one also defines
\[ \llf=\{ L\in[\nn]: (f_n)_{n\in L} \text{ is pointwise convergent to } f\}. \]
Both $\lbf$ and $\llf$ have been studied in the literature.
In \cite{Kra}, Krawczyk proved that $\llf$ is Borel if and only
if $f$ is a $G_\delta$ point of $\kk$. The set $\lbf$ (more precisely
the set $\lbf\setminus\llf$) has been also considered by
Todor\v{c}evi\'{c} in \cite{To}, in his solution of characters
of points in separable Rosenthal compacta.

There is an awkward fact concerning $\lbf$, namely that $\lbf$
can be non-Borel. However, a deep result of G. Debs \cite{Debs}
implies that $\lbf$ always contains a Borel cofinal set and
this subset of $\lbf$ can serve as a coding. This leads to the
following tree-representation of separable Rosenthal compacta.
\bigskip

\noindent \textbf{Proposition A.} \textit{Let $\kk$ be a separable
Rosenthal compact. Then there exist a countable tree $T$ and a
sequence $(g_t)_{t\in T}$ in $\kk$ such that the following hold.
\begin{enumerate}
\item[(1)] For every $\sg\in [T]$ the sequence $(g_{\sg|n})_{n\in\nn}$
is pointwise convergent.
\item[(2)] For every $f\in\kk$ there exists $\sg\in [T]$ such that
$f$ is the pointwise limit of the sequence $(g_{\sg|n})_{n\in\nn}$.
\end{enumerate} }
\bigskip

It is natural to ask when the set $\lbf$ is Borel or,
equivalently, when $\lbf$ can serve itself as a coding (it is easy
to see that $\lbf$ and $\llf$ are always $\PB^1_1$). In this
direction, the following is shown.
\bigskip

\noindent \textbf{Theorem B.} \textit{Let $\kk$ be a separable
Rosenthal compact.
\begin{enumerate}
\item[(1)] If $\kk$ is not first countable, then for every dense
sequence $\bs$ in $\kk$ the set $\lbf$ is $\PB^1_1$-complete.
\item[(2)] If $\kk$ is pre-metric of degree exactly two, then for
every dense sequence $\bs$ in $\kk$ the set $\lbf$ is Borel.
\end{enumerate} }
\bigskip

\noindent Part (1) above is based on a result of Krawczyk. In
part (2), $\kk$ is said to be a pre-metric compactum of degree exactly
two if there exist a countable subset $D$ of $X$ and a countable subset
$\mathcal{D}$ of $\kk$ such that at most two functions in $\kk$
coincide on $D$ and moreover for every $f\in\kk\setminus\mathcal{D}$
there exists $g\in\kk$ with $f\neq g$ and such that $g$ coincides
with $f$ on $D$. This is a subclass of the class of pre-metric
compacta of degree at most two, as it is defined by Todor\v{c}evi\'{c}
in \cite{To}. We notice that part (2) of Theorem B cannot be lifted
to all pre-metric compacta of degree at most two, as there are examples
of such compacta for which the set $\lbf$ is $\PB^1_1$-complete.

We proceed now to discuss some applications of the above approach.
It is well-known that to every real-valued Baire-1 function on a
Polish space $X$ one associates several (equivalent) ordinal
rankings measuring the discontinuities of the function. An
extensive study of them is done by A. S. Kechris and A. Louveau in
\cite{KL}. An important example is the separation rank $\alpha$.
We have the following boundedness result concerning this index.
\bigskip

\noindent \textbf{Theorem C.} \textit{Let $X$ be a Polish space
and $\bs$ a sequence of Borel real-valued functions on $X$. Let }
\[ \lbf^1=\{ L\in[\nn]: (f_n)_{n\in L} \text{ \textit{is pointwise
convergent to a Baire-1 function}}\}. \]
\textit{Then for every $C\subseteq \lbf^1$ Borel, we have }
\[ \sup\{ \alpha(f_L): L\in C\} <\omega_1 \]
\textit{where, for every $L\in C$, $f_L$ denotes the pointwise
limit of the sequence $(f_n)_{n\in L}$. }
\bigskip

\noindent The proof of Theorem C actually is based on the fact that the
separation rank is a parameterized $\PB^1_1$-rank. Theorem C, combined
with the result of Debs, gives a proof of the boundedness result of
\cite{ADK}. Historically the first result of this form is due to J. Bourgain
\cite{Bou}. We should point out that in order to give a descriptive
set-theoretic proof of Bourgain's result one does not need to invoke
Debs' theorem.

Theorem C can also be used to provide natural counterexamples
to the following approximation question in Ramsey theory. Namely,
given a $\SB^1_1$ subset $A$ of $[\nn]$ can we always find a
Borel set $B\supseteq A$ such that the difference $B\setminus A$
is Ramsey-null? A. W. Miller had also asked whether there exists
an analytic set which is not equal to Borel modulo Ramsey-null
(see \cite{Mi}, Problem $1.6^*$). We show the following.
\bigskip

\noindent \textbf{Proposition D.} \textit{There exists a $\SB^1_1$
Ramsey-null subset $A$ of $[\nn]$ for which there does not exist a
Borel set $B\supseteq A$ such that the difference $B\setminus A$
is Ramsey-null.}
\bigskip

\noindent \textbf{Acknowledgments.} Part of this work was done when
I visited the Department of Mathematics at Caltech. I would like to
thank the department for the hospitality and the financial support.
I would also like to thank the anonymous referee for his thorough
report which substantially improved the presentation of the paper.


\section{Preliminaries}

For any Polish space $X$, by $K(X)$ we denote the hyperspace
of all compact subsets of $X$, equipped with the Vietoris
topology. By $\mathcal{B}_1(X)$ (respectively $\mathcal{B}(X)$)
we denote the space of all real-valued Baire-1 (respectively Borel)
functions on $X$. By $\nn=\{0,1,2,...\}$ we denote the natural
numbers, while by $[\nn]$ the set of all infinite subsets of
$\nn$ (which is clearly a Polish subspace of $2^\nn$). For every
$L\in[\nn]$, by $[L]$ we denote the set of all infinite subsets
of $L$. For every function $f:X\to\rr$ and every $a\in\rr$ we set
$[f>a]=\{x:f(x)>a\}$. The set $[f<a]$ has the obvious meaning.

Our descriptive set-theoretic notation and terminology follows
\cite{Kechris}. So $\SB^1_1$ stands for the analytic sets, while
$\PB^1_1$ for the co-analytic. A set is said to be $\PB^1_1$-true
if it is co-analytic non-Borel. If $X, Y$ are Polish spaces,
$A\subseteq X$ and $B\subseteq Y$, we say that $A$ is Wadge
(Borel) reducible to $B$ if there exists a continuous (Borel)
map $f:X\to Y$ such that $f^{-1}(B)=A$. A set $A$ is said to
be $\PB^1_1$-complete if it is $\PB^1_1$ and any other co-analytic
set is Borel reducible to $A$. Clearly any $\PB^1_1$-complete set
is $\PB^1_1$-true. The converse is also true under large cardinal
hypotheses (see \cite{MK} or \cite{Mo}). If $A$ is $\PB^1_1$, then
a map $\phi:A\to\omega_1$ is said to be a $\PB^1_1$-rank on $A$ if
there are relations $\leq_\Sigma$, $\leq_\Pi$ in $\SB^1_1$ and
$\PB^1_1$ respectively, such that for any $y\in A$
\[ \phi(x)\leq \phi(y) \Leftrightarrow x\leq_\Sigma y
\Leftrightarrow x \leq_\Pi y .\]
Notice that if $A$ is Borel reducible to $B$ via a Borel map $f$
and $\phi$ is a $\PB^1_1$-rank on $B$, then the map $\psi:A\to\omega_1$
defined by $\psi(x)=\phi(f(x))$ is a $\PB^1_1$-rank on $A$.
\bigskip

\noindent \textbf{Trees.} If $A$ is a non-empty set, by $A^{<\nn}$
we denote the set of all finite sequences of $A$. We view $A^{<\nn}$
as a tree equipped with the (strict) partial order $\sqsubset$ of
extension. If $s\in A^{<\nn}$, then the length $|s|$ of $s$ is defined
to be the cardinality of the set $\{t: t\sqsubset s\}$. If $s,t\in A^{<\nn}$,
then by $s^\con t$ we denote their concatenation. If $A=\nn$ and $L\in[\nn]$,
then by $[L]^{<\nn}$ we denote the increasing finite sequences in $L$.
For every $x\in A^\nn$ and every $n\geq 1$ we let $x|n=\big(x(0),...,x(n-1)
\big)\in A^{<\nn}$ while $x|0=(\varnothing)$. A tree $T$ on $A$ is a
downwards closed subset of $A^{<\nn}$. The set of all trees on $A$
is denoted by $\tr(A)$. Hence
\[ T\in\tr(A) \Leftrightarrow \forall s,t\in A^{<\nn} \ (t\sqsubset s \
\wedge \ s\in T\Rightarrow t\in T). \]
For a tree $T$ on $A$, the body $[T]$ of $T$ is defined to be the set
$\{ x\in A^\nn: x|n\in T \text{ for all } n\in\nn\}$. A tree $T$ is
called pruned if for every $s\in T$ there exists $t\in T$ with
$s\sqsubset t$. It is called well-founded if for every $x\in A^\nn$
there exists $n$ such that $x|n\notin T$, equivalently if $[T]=\varnothing$.
The set of well-founded trees on $A$ is denoted by $\wf(A)$. If $T$ is
a well-founded tree we let $T'=\{ t: \exists s\in T \text{ with }
t\sqsubset s\}$. By transfinite recursion, one defines the iterated
derivatives $T^{(\xi)}$ of $T$. The order $o(T)$ of $T$ is defined
to be the least ordinal $\xi$ such that $T^{(\xi)}=\varnothing$. If
$S, T$ are well-founded trees, then a map $\phi:S\to T$ is called monotone
if $s_1\sqsubset s_2$ in $S$ implies that $\phi(s_1)\sqsubset \phi(s_2)$
in $T$. Notice that in this case $o(S)\leq o(T)$. If $A, B$ are sets,
then we identify every tree $T$ on $A\times B$ with the set of all
pairs $(s,t)\in A^{<\nn}\times B^{<\nn}$ such that $|s|=|t|=k$ and
$\big( (s(0),t(0)),....,(s(k-1),t(k-1))\big)\in T$. If $A=\nn$, then
we shall simply denote by $\tr$ and $\wf$ the sets of all trees and
well-founded trees on $\nn$ respectively. The set $\wf$ is
$\PB^1_1$-complete and the map $T\to o(T)$ is a $\PB^1_1$-rank on $\wf$.
The same also holds for $\wf(A)$ for every countable set $A$.
\bigskip

\noindent \textbf{The separation rank.} Let $X$ be a Polish space.
Given $A, B\subseteq X$ one associates with them a derivative on
closed sets, by $F'_{A,B}=\overline{F\cap A}\cap \overline{F\cap B}$.
By transfinite recursion, we define the iterated derivatives
$F^{(\xi)}_{A,B}$ of $F$ and we set $\alpha(F,A,B)$ to be the least
ordinal $\xi$ with $F^{(\xi)}_{A,B}=\varnothing$ if such
an ordinal exists, otherwise we set $\alpha(F,A,B)=\omega_1$.
Now let $f:X\to\rr$ be a function. For each pair $a,b\in\rr$ with
$a<b$ let $A=[f<a]$ and $B=[f>b]$. For every $F\subseteq X$ closed let
$F^{(\xi)}_{f,a,b}=F^{(\xi)}_{A,B}$ and $\alpha(f,F,a,b)=\alpha(F,A,B)$.
Let also $\alpha(f,a,b)=\alpha(f,X,a,b)$. The separation rank of $f$ is
defined by
\[ \alpha(f)=\sup\{ \alpha(f,a,b): a,b\in\qq, a<b \}. \]
The basic fact is the following (see \cite{KL}).
\begin{prop}
\label{kl} A function $f$ is Baire-1 if and only if $\alpha(f)<\omega_1$.
\end{prop}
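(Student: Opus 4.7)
Both implications rest on Baire's classical characterization: $f$ is Baire-1 iff for every nonempty closed $F\subseteq X$ the restriction $f|_F$ has a dense set of continuity points. Throughout, fix rationals $a<b$, set $A=[f<a]$, $B=[f>b]$, and write $F_\xi:=X^{(\xi)}_{f,a,b}$. For the forward direction I would prove $\alpha(f,a,b)<\omega_1$ for each rational pair $a<b$, after which $\alpha(f)$ is the supremum of countably many countable ordinals and is thus itself $<\omega_1$. The key step is that $F_{\xi+1}\subsetneq F_\xi$ whenever $F_\xi\neq\varnothing$: any continuity point $x\in F_\xi$ of $f|_{F_\xi}$ lies outside $(F_\xi)'_{A,B}$, since a short case analysis on $f(x)$ relative to $a$ and $b$ yields a neighborhood of $x$ disjoint from either $F_\xi\cap A$ or $F_\xi\cap B$. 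Baire's theorem (applied to $f|_{F_\xi}$, which inherits Baire-1-ness from $f$) supplies such $x$ in every nonempty $F_\xi$, so the chain descends strictly; second countability of $X$ forces it to terminate below $\omega_1$.

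For the converse I would argue contrapositively and produce rationals $a<b$ with $\alpha(f,a,b)=\omega_1$ whenever $f$ is not Baire-1. Baire's characterization yields a nonempty closed $F_0\subseteq X$ on which $f|_{F_0}$ has no continuity point; writing $F_0=\bigcup_{n\ge 1}\{x\in F_0:\mathrm{osc}(f|_{F_0})(x)\ge 1/n\}$ as a countable cover by closed sets and applying Baire category on the Polish space $F_0$ produces $\ee>0$ and a nonempty relatively open piece on which the oscillation is uniformly $\ge\ee$. For each $x$ in this piece, the $\liminf$ and $\limsup$ of $f$ along $F_0$ at $x$ differ by at least $\ee$, so some rational pair $a<b$ witnesses $x\in\overline{F_0\cap[f<a]}\cap\overline{F_0\cap[f>b]}$. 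A second application of Baire category, over the countable family of rational pairs, then selects a single $(a,b)$ and a nonempty open $V\subseteq X$ with $V\cap F_0\subseteq\overline{F_0\cap[f<a]}\cap\overline{F_0\cap[f>b]}$; setting $F:=\overline{V\cap F_0}$ gives a nonempty closed set in which both $F\cap[f<a]$ and $F\cap[f>b]$ are dense.

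A transfinite induction then shows $F\subseteq F_\xi$ for every $\xi<\omega_1$: at successors, $F\subseteq F_\xi$ together with density of $F\cap[f<a]$ in $F$ yields $F=\overline{F\cap[f<a]}\subseteq\overline{F_\xi\cap[f<a]}$, symmetrically for $[f>b]$, and hence $F\subseteq F_{\xi+1}$; limits are immediate. Consequently $F_\xi\supseteq F\neq\varnothing$ for all countable $\xi$, i.e.\ $\alpha(f,a,b)=\omega_1$, contradicting $\alpha(f)<\omega_1$. I expect the backward direction to be the main obstacle: extracting uniform oscillation from the bare failure of $f$ to be Baire-1, and then pinning down a single rational pair $(a,b)$ for which the density of both level sets inside $F$ survives, requires two successive Baire-category reductions inside $F_0$; once $F$ is in hand, the fixed-point induction and the contradiction are routine.
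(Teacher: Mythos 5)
Your argument is correct: the forward direction via the Baire characterization theorem (a continuity point of $f|_{F_\xi}$ escapes the next derivative, so the closed sets strictly decrease and must stabilize below $\omega_1$ by second countability) and the converse via two Baire-category reductions producing a closed $F$ in which both $F\cap[f<a]$ and $F\cap[f>b]$ are dense, hence $F\subseteq F^{(\xi)}_{f,a,b}$ for all $\xi$, is exactly the standard proof from Kechris--Louveau. The paper itself gives no proof and simply cites \cite{KL}, so your write-up supplies the intended argument.
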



\section{Codings of separable Rosenthal compacta}

Let $X$ be a Polish space and $\bs$ a sequence of Borel real-valued
functions on $X$. Assume that the closure $\kk$ of $\{f_n\}_n$ in
$\rr^X$ is a compact subset of $\bx$. Let us consider the set
\[ \lbf=\{ L\in[\nn]: (f_n)_{n\in L} \text{ is pointwise convergent} \}. \]
For every $L\in\lbf$, by $f_L$ we shall denote the pointwise limit
of the sequence $(f_n)_{n\in L}$. Notice that $\lbf$ is $\PB^1_1$.
As the pointwise topology is not effected by the topology on $X$,
we may (and we will) assume that each $f_n$ is continuous (and so
$\kk$ is a separable Rosenthal compact). By a result of H. P.
Rosenthal \cite{Ro}, we get that $\lbf$ is cofinal. That is,
for every $M\in[\nn]$ there exists $L\in[M]$ such that $L\in\lbf$.
Also the celebrated Bourgain-Fremlin-Talagrand theorem \cite{BFT}
implies that $\lbf$ totally describes $\kk$. However, most important
for our purposes is the fact that $\lbf$ contains a Borel cofinal set.
This is a consequence of the following theorem of G. Debs \cite{Debs}
(which itself is the classical interpretation of the effective version
of the Bourgain-Fremlin-Talagrand theorem, proved by G. Debs in \cite{Debs}).
\begin{thm}
\label{d} Let $Y, X$ be Polish spaces and $(g_n)_n$ be a sequence of
Borel functions on $Y\times X$ such that for every $y\in Y$ the
sequence $\big(g_n(y,\cdot)\big)_n$ is a sequence of continuous
functions relatively compact in $\bx$. Then there exists a Borel
map $\sg:Y\to [\nn]$ such that for any $y\in Y$, the sequence
$\big(g_n(y,\cdot)\big)_{n\in \sg(y)}$ is pointwise convergent.
\end{thm}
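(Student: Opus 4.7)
My plan is to obtain the Borel selector via an effective (i.e., parameterized) strengthening of the classical Bourgain--Fremlin--Talagrand theorem. After relativization, it suffices to prove that for a recursive Borel sequence $(g_n)$ satisfying the hypothesis, the $\PB^1_1$ set
\[ C=\{(y,L)\in Y\times[\nn]:(g_n(y,\cdot))_{n\in L}\text{ is pointwise convergent}\} \]
admits a Borel uniformization. The classical BFT, applied sectionwise, together with Rosenthal's $\ell_1$-theorem, guarantees that each $C_y$ is not merely nonempty but cofinal in $[\nn]$; the content of the theorem is therefore really about bounding the descriptive complexity of a global selection.

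The heart of the argument is an effective BFT: for each $y$, the section $C_y$ contains a $\DB^1_1(y)$-element. I would attempt this by revisiting the combinatorial proof of BFT through Rosenthal's $\ell_1$-dichotomy. Given a $\DB^1_1(y)$-infinite $M\in[\nn]$, the tree-pruning procedure that extracts a pointwise convergent sub-subsequence of $(g_n(y,\cdot))_{n\in M}$ is controlled by the absence of $\ell_1$-subsequences, which is forced by the relative compactness of $(g_n(y,\cdot))_n$ in $\bx$. The associated "failure tree," encoding candidate refinements that do not converge, is $\DB^1_1(y)$ and well-founded, so Kleene's basis theorem produces a $\DB^1_1(y)$-branch and hence a $\DB^1_1(y)$-element of $C_y\cap[M]$. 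The same analysis shows that the $\DB^1_1(y)$-elements are cofinal in $C_y$. Once this section-wise effective selection is in hand, a Borel uniformizer of $C$ arises from the Moschovakis-type principle that a $\PB^1_1$ set whose sections each contain a $\DB^1_1(\text{parameter})$-element admits a Borel uniformization: the "leftmost" $\DB^1_1(y)$-element, relative to a canonical effective enumeration of the hyperarithmetic-in-$y$ reals, depends Borel-ly on $y$ by a standard hyperarithmetic-boundedness argument.

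The main obstacle is the effective BFT step itself. Textbook proofs of BFT are not manifestly constructive: the Fr\'echet-type extraction uses non-canonical accumulation points, and although Rosenthal's combinatorial construction is explicit, it requires delicate bookkeeping to guarantee that the auxiliary tree and the extracted subsequence remain uniformly $\DB^1_1(y)$. Debs' contribution — which any proof must faithfully reproduce — is precisely this effectivization carried out in a parameterized framework, so that the selector $\sg$ ends up Borel in $y$. The non-effective portions of the argument (classical BFT to get nonemptiness of sections, and the Moschovakis-style assembly into a uniformizer) are, by contrast, comparatively routine.
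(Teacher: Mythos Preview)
The paper does not prove this theorem; it is quoted as a result of Debs \cite{Debs} and described as ``the classical interpretation of the effective version of the Bourgain--Fremlin--Talagrand theorem, proved by G.~Debs.'' Your high-level outline --- effectivize BFT to show that each section $C_y$ contains a $\DB^1_1(y)$ element, then invoke a uniformization principle to assemble a Borel selector --- is precisely what that description says, and you yourself note that the substantive content is Debs' effectivization, which you do not carry out. In that sense you and the paper are on equal footing: both defer to \cite{Debs} for the actual argument.

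That said, your sketch of the effective step contains a genuine confusion. You write that the ``failure tree'' is $\DB^1_1(y)$ and well-founded, and that ``Kleene's basis theorem produces a $\DB^1_1(y)$-branch.'' A well-founded tree has no infinite branches, so there is nothing for any basis theorem to select; and Kleene's basis theorem concerns nonempty $\SB^1_1$ sets, not branches of well-founded trees. What is actually used is closer to the following: the well-foundedness of the relevant $\DB^1_1(y)$ tree bounds its rank by a $y$-recursive ordinal, and an effective transfinite recursion along that rank (or, equivalently, an application of the effective Ramsey/selection machinery in the Gandy--Harrington setting) yields a $\DB^1_1(y)$ convergent subsequence. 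Alternatively, one argues that $C_y$ is $\PB^1_1(y)$ and, by the effective BFT, nonempty in a way that forces it to meet every $\DB^1_1(y)$ dense-open set, whence the Gandy basis or a $\PB^1_1$-uniformization argument applies. Either way, the passage from ``well-founded failure tree'' to ``hyperarithmetic witness'' is not via Kleene's basis theorem as you state it, and getting this step right is exactly the content of Debs' paper.
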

Let us show how Theorem \ref{d} implies the existence of a Borel
cofinal subset of $\lbf$. Given $L, M\in[\nn]$ with $L=\{l_0<l_1<...\}$
and $M=\{m_0<m_1<...\}$ their increasing enumerations, let
$L*M=\{ l_{m_0}<l_{m_1}<...\}$. Clearly $L*M\in [L]$ and moreover
the function $(L,M)\mapsto L*M$ is continuous. Let $(f_n)_n$ be as
in the beginning of the section and let $Y=[\nn]$. For every $n\in\nn$
define $g_n:[\nn]\times X\to \rr$ by
\[ g_n(L,x)= f_{l_n}(x) \]
where $l_n$ in the $n^{th}$ element of the increasing enumeration
of $L$. The sequence $(g_n)_n$ satisfies all the hypotheses of Theorem
\ref{d}. Let $\sg:[\nn]\to[\nn]$ be the Borel function such that
for every $L\in[\nn]$ the sequence
\[ \big( g_n(L,\cdot)\big)_{n\in\sg(L)}=(f_n)_{n\in L*\sg(L)} \]
is pointwise convergent. The function $L\to L*\sg(L)$ is Borel and
so the set
\[ A=\{ L*\sg(L): L\in[\nn]\} \]
is an analytic cofinal subset of $\lbf$. By separation we get that there
exists a Borel cofinal subset of $\lbf$. The cofinality of this set in
conjunction with the Bourgain-Fremlin-Talagrand theorem give us the
following corollary.
\begin{cor}
\label{c} Let $X$ be a Polish space and $(f_n)_n$ a sequence of Borel
functions on $X$ which is relatively compact in $\mathcal{B}(X)$.
Then there exists a Borel set $C\subseteq [\nn]$ such that for every
$c\in C$ the sequence $(f_n)_{n\in c}$ is pointwise convergent and for
every accumulation point $f$ of $(f_n)_n$ there exists $c\in C$
with $f=\lim_{n\in c} f_n$.
\end{cor}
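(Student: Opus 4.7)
The plan is to combine the Borel cofinal subset of $\lbf$ produced by Theorem \ref{d} with the Bourgain--Fremlin--Talagrand theorem, which guarantees that accumulation points of $(f_n)_n$ can actually be reached by convergent subsequences. First, I take $C$ to be the Borel cofinal subset of $\lbf$ constructed in the paragraph preceding the corollary: starting from the Borel map $\sigma:[\nn]\to[\nn]$ supplied by Debs' theorem and setting $A=\{L*\sigma(L):L\in[\nn]\}$, one obtains an analytic cofinal subset of $\lbf$, and the Lusin separation theorem then produces a Borel set $C$ with $A\subseteq C\subseteq \lbf$. Since $A$ is already cofinal in $[\nn]$, so is $C$, and by construction every $c\in C$ lies in $\lbf$, i.e.\ $(f_n)_{n\in c}$ is pointwise convergent.

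Next I argue that this $C$ already detects every accumulation point. Let $f$ be an accumulation point of $(f_n)_n$. Since $(f_n)_n$ is relatively compact in $\bx$, its closure $\kk$ is a separable Rosenthal compact, and by the Bourgain--Fremlin--Talagrand theorem $\kk$ is Fr\'echet. Hence there exists $L_0\in[\nn]$ such that $(f_n)_{n\in L_0}$ converges pointwise to $f$; in particular $L_0\in\lbf$ with $f_{L_0}=f$.

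Finally, cofinality of $C$ in $[\nn]$ applied to $L_0$ yields some $c\in C$ with $c\subseteq L_0$. Since $(f_n)_{n\in c}$ is a subsequence of the pointwise convergent sequence $(f_n)_{n\in L_0}$, it converges pointwise to the same limit, so $\lim_{n\in c} f_n = f_{L_0}=f$. This is exactly what the corollary demands.

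The only nontrivial ingredient is the existence of the Borel cofinal subset of $\lbf$, which has already been established via Debs' theorem and Lusin separation in the discussion just before the corollary; the rest is a routine application of the Fr\'echet property of separable Rosenthal compacta. I therefore do not expect any real obstacle here: the proof is essentially a bookkeeping combination of the two deep theorems already quoted.
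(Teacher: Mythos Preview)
Your proposal is correct and follows essentially the same route as the paper: the paper simply states that ``the cofinality of this set in conjunction with the Bourgain--Fremlin--Talagrand theorem give us the following corollary,'' and you have spelled out exactly that argument---take the Borel cofinal $C\subseteq\lbf$ obtained via Debs' theorem and Lusin separation, use the Fr\'echet property from BFT to reach any accumulation point by some $L_0\in\lbf$, and then pass to $c\in C\cap[L_0]$ by cofinality. One minor terminological point: calling $\kk$ a ``separable Rosenthal compact'' tacitly uses the reduction (mentioned earlier in the paper) to continuous $f_n$; what you actually need is just the Fr\'echet property for pointwise compact sets of Borel functions, which BFT provides directly.
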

In the sequel we will say that the set $C$ obtained by Corollary
\ref{c} is a \textit{code} of $(f_n)_n$. If $\kk$ is a separable
Rosenthal compact and $(f_n)_n$ is a dense sequence in $\kk$,
then we will say that $C$ is the code of $\kk$. Notice that the
codes depend on the dense sequence. If $c\in C$, then by $f_c$
we shall denote the function coded by $c$. That is $f_c$ is the
pointwise limit of the sequence $(f_n)_{n\in c}$.

The following lemma  captures the basic definability properties
of the set of codes. Its easy proof is left to the reader.
\begin{lem}
\label{cl1} Let $X$ and $(f_n)_n$ be as in Corollary \ref{c} and
let $C$ be a code of $(f_n)_n$. Then for every $a\in \rr$ the
following relations
\begin{enumerate}
\item[(i)] $(c,x)\in R_a \Leftrightarrow f_c(x)>a$,
\item[(ii)] $(c,x)\in R'_a \Leftrightarrow f_c(x)\geq a$,
\item[(iii)] $(c_1,c_2,x)\in D_a \Leftrightarrow |f_{c_1}(x)-f_{c_2}(x)|>a$
\end{enumerate}
are Borel.
\end{lem}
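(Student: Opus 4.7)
The plan is to unwrap the definition of pointwise convergence using only countable quantifiers applied to sets that are manifestly Borel in $[\nn]\times X$; since $C$ is Borel, it suffices to check Borelness of each relation in the ambient product and then intersect with $C\times X$ (or $C\times C\times X$ in (iii)).

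For part (i), I would exploit the fact that for $c\in C$ the sequence $(f_n(x))_{n\in c}$ converges to $f_c(x)$, and rewrite
\[ f_c(x)>a \iff \exists k\geq 1\ \exists N\in\nn\ \forall n\geq N\ \bigl(n\in c\ \Rightarrow\ f_n(x)>a+1/k\bigr). \]
For fixed $n$, the relation ``$n\in c$'' is clopen in $[\nn]\subseteq 2^\nn$ (it depends only on the $n$-th coordinate of $c$), and because each $f_n$ is continuous, $\{(c,x): f_n(x)>a+1/k\}$ is open in $[\nn]\times X$. Boolean combinations followed by the countable quantifiers $\exists k,\exists N,\forall n\geq N$ preserve Borelness, so the right-hand side defines a Borel subset of $[\nn]\times X$ whose intersection with $C\times X$ is exactly $R_a$.

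For part (ii), I would either run the argument of (i) with the obvious modification, or more economically observe that $R'_a=\bigcap_{k\geq 1}R_{a-1/k}$, which is Borel by (i). For part (iii), I would combine the two limits into a single double limit: since both $f_{c_1}(x)$ and $f_{c_2}(x)$ exist,
\[ |f_{c_1}(x)-f_{c_2}(x)|=\lim_{\substack{n_1\in c_1\\ n_2\in c_2}}|f_{n_1}(x)-f_{n_2}(x)|, \]
and then apply the same ``gap $+1/k$, large enough indices'' formulation, quantifying over pairs $(n_1,n_2)$ with $n_1\in c_1$, $n_2\in c_2$ and $n_1,n_2\geq N$.

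I do not expect any real obstacle; the author's remark that the proof is easy is accurate. The only point worth being explicit about is the standard device of inserting a positive gap $1/k$ to convert a strict inequality of the limit into an eventual strict inequality of the terms---the same device underlying the classical fact that sub-level sets of Baire class $1$ functions are Borel.
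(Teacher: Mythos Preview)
Your argument is correct and is exactly the routine verification the paper has in mind when it writes ``its easy proof is left to the reader''; there is no proof in the paper to compare with, and your $\varepsilon$--$N$ unwinding is the standard one. One small inaccuracy: in the setting of Corollary~\ref{c} the $f_n$ are only assumed Borel, not continuous, so the set $\{(c,x):f_n(x)>a+1/k\}$ is Borel rather than open---but this changes nothing in your argument, since countable Boolean operations over Borel sets still yield Borel sets.
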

The existence of codings of separable Rosenthal compacta gives us
the following tree-representation of them.
\begin{prop}
\label{ct1} Let $\kk$ be a separable Rosenthal compact. Then there
exist a countable tree $T$ and a sequence $(g_t)_{t\in T}$ in $\kk$
such that the following hold.
\begin{enumerate}
\item[(1)] For every $\sg\in [T]$ the sequence $(g_{\sg|n})_{n\in\nn}$
is pointwise convergent.
\item[(2)] For every $f\in\kk$ there exists $\sg\in [T]$ such that $f$
is the pointwise limit of the sequence $(g_{\sg|n})_{n\in\nn}$.
\end{enumerate}
\end{prop}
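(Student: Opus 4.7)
The plan is to promote the Borel coding of $\kk$ furnished by Corollary \ref{c} into a genuine tree indexing. Pick a dense sequence $(f_n)_n$ in $\kk$ in which each term is repeated infinitely often; with this convention, every $f\in\kk$ is the pointwise limit of some infinite subsequence of $(f_n)_n$, and so the Borel set $C\subseteq[\nn]$ delivered by Corollary \ref{c} covers all of $\kk$. Since $C$ is analytic, fix a continuous surjection $\pi:\nn^\nn\to C$ and set $T=\bt$, so that $[T]=\nn^\nn$.

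To assign a label $g_t\in\kk$ to each $t\in T$, the idea is to let $g_{\sigma|k}$ track the enumeration of the code $\pi(\sigma)\in[\nn]$ cell by cell. For $L\in[\nn]$ and $j\in\nn$, write $L(j)$ for the $j$-th element of $L$; the map $L\mapsto L(j)$ is continuous from $[\nn]$ into the discrete space $\nn$, hence so is $\tau\mapsto\pi(\tau)(j)$. For $t\in T$ define
\[ i(t)=\sup\{\, i\geq 0 : \text{for every }j\leq i,\ \tau\mapsto\pi(\tau)(j)\text{ is constant on }[t]\,\}, \]
with $i(t)=-1$ if no such $i$ exists. Let $\sigma_t=t\con(0,0,\ldots)\in[t]$ and set $g_t=f_{\pi(\sigma_t)(i(t))}$ if $i(t)\geq 0$ and $g_t=f_0$ otherwise. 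Two observations are essential: first, $i(\cdot)$ is non-decreasing along $\sqsubset$ because $t\sqsubset t'$ shrinks $[t]$ to $[t']$; second, for every $\sigma\in\nn^\nn$ the continuity of each $\tau\mapsto\pi(\tau)(j)$ at $\sigma$ forces $i(\sigma|k)\to\infty$ as $k\to\infty$.

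Given $\sigma\in[T]$, both $\sigma$ and $\sigma_{\sigma|k}$ belong to $[\sigma|k]$, so the defining constancy yields $\pi(\sigma_{\sigma|k})(j)=\pi(\sigma)(j)$ for all $j\leq i(\sigma|k)$. Consequently $g_{\sigma|k}=f_{\pi(\sigma)(i(\sigma|k))}$. Since the sequence $(f_{\pi(\sigma)(j)})_j$ is the original pointwise-convergent enumeration $(f_n)_{n\in\pi(\sigma)}$ with limit $f_{\pi(\sigma)}\in\kk$, and $i(\sigma|k)\to\infty$, the sequence $(g_{\sigma|k})_k$ converges pointwise to $f_{\pi(\sigma)}$, establishing (1). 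For (2), given $f\in\kk$, the choice of $(f_n)_n$ and $C$ yields $c\in C$ with $f=\lim_{n\in c}f_n$, and surjectivity of $\pi$ provides $\sigma\in\nn^\nn=[T]$ with $\pi(\sigma)=c$, whence $g_{\sigma|k}\to f$ by (1).

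The main obstacle is that $g_t$ must be defined using only the node $t$, although $t$ lies on many branches which generally carry distinct codes in $C$. The resolution is the stabilization index $i(t)$: it records precisely how many terms of the enumeration of $\pi(\sigma)$ are forced once $\sigma$ is known to extend $t$, and pinning $g_t$ to the last such forced term along the auxiliary branch $\sigma_t$ ensures that, along every genuine branch $\sigma$ through $t$, the label $g_{\sigma|k}$ coincides with the correct term of the enumeration of $\pi(\sigma)$; the convergence along branches then reduces to a subsequence argument inside $(f_n)_{n\in\pi(\sigma)}$.
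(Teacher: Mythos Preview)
Your argument follows the same overall strategy as the paper's: start from the Borel code set $C$ of Corollary~\ref{c}, unfold it into a tree, and label nodes so that along each branch one recovers a cofinal sampling of the enumeration $(f_n)_{n\in c}$ for some $c\in C$. The paper implements the unfolding by writing $C=\mathrm{proj}_{2^\nn}F$ for a closed $F\subseteq 2^\nn\times\nn^\nn$ and taking $T$ to be the pruned tree on $2\times\nn$ with $[T]=F$; the label at a node $(s,w)$ is simply $f_n$, where $n$ is the last coordinate at which $s$ equals~$1$. Your version replaces this by a continuous surjection $\pi:\nn^\nn\to C$ together with the stabilization index $i(t)$. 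These are equivalent unfolding devices, and your argument for convergence along branches (a non-decreasing, unbounded reindexing of a pointwise convergent sequence) is correct.

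There is one small oversight. As written, $i(t)$ can be $+\infty$: this happens exactly when $\pi$ is constant on the basic open set $[t]$, and nothing prevents a continuous surjection onto $C$ from being constant on some $[t]$. In that case $\pi(\sigma_t)(i(t))$ is undefined and $g_t$ has no meaning. The repair is painless: replace $i(t)$ by $\min\{i(t),|t|\}$ throughout. Monotonicity along $\sqsubset$ is preserved (both $i(\cdot)$ and $|\cdot|$ increase), the fact that $i(\sigma|k)\to\infty$ still holds (both quantities tend to $\infty$), and the constancy needed for the identity $g_{\sigma|k}=f_{\pi(\sigma)(i(\sigma|k))}$ only becomes easier to verify. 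With this adjustment your proof is complete and essentially equivalent to the paper's.
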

\begin{proof}
Let $(f_n)_n$ be a dense sequence in $\kk$. We may assume that for
every $n\in\nn$ the set $\{m: f_m=f_n\}$ is infinite. This extra
condition guarantees that for every $f\in\kk$ there exists $L\in\lbf$
such that $f=f_L$. Let $C$ be the codes of $(f_n)_n$. Now we shall
use a common unfolding trick. As $C$ is Borel in $2^\nn$ there exists
$F\subseteq 2^\nn\times \nn^\nn$ closed such that $C=\mathrm{proj}_{2^\nn}F$.
Let $T$ be the unique (downwards closed) pruned tree on $2\times \nn$
such that $F=[T]$. This will be the desired tree. It remains to define
the sequence $(g_t)_{t\in T}$. Set $g_{(\varnothing,\varnothing)}=f_0$.
Let $t=(s,w)\in T$ and $k\geq 1$ with $s\in \ct$, $w\in\bt$ and
$|s|=|w|=k$.  Define $n_t\in\nn$ to be $n_t=\max\{ n<k: s(n)=1\}$,
if the set $\{n<k:s(n)=1\}$ is non-empty, and $n_t=0$ otherwise.
Finally set $g_t=f_{n_t}$. It is easy to check that for every
$\sg\in[T]$ the sequence $(g_{\sg|n})_n$ is pointwise convergent,
and so (1) is satisfied. That (2) is also satisfied follows from
the fact that for every $f\in\kk$ there exists $L\in \lbf$ with
$f=f_L$ and the fact that $C$ is cofinal.
\end{proof}
\begin{rem}
(1) We should point out that Corollary \ref{c}, combined with
J. H. Silver's theorem (see \cite{MK} or \cite{S2}) on the number
of equivalence classes of co-analytic equivalence relations, gives an
answer to the cardinality problem of separable Rosenthal compacta,
a well-known fact that can also be derived by the results of \cite{To}
(see also \cite{ADK}, Remark 3). Indeed, let $\kk$ be one and let
$C$ be the set of codes of $\kk$. Define the following equivalence
relation on $C$, by
\[ c_1 \sim c_2 \Leftrightarrow f_{c_1}=f_{c_2} \Leftrightarrow
\forall x \ f_{c_1}(x)=f_{c_2}(x). \]
Then $\sim$ is a $\PB^1_1$ equivalence relation. Hence, by Silver's
dichotomy, either the equivalence classes are countable or perfectly
many. The first case implies that $|\kk|=\aleph_0$, while the second
one that $|\kk|=2^{\aleph_0}$.\\
(2) Although the set $C$ of codes of a separable Rosenthal compact
$\kk$ is considered to be a Borel set which describes $\kk$ efficiently,
when it is considered as a subset of $[\nn]$ it can be chosen to
have rich structural properties. In particular, it can be chosen to be
hereditary (i.e. if $c\in C$ and $c'\in [c]$, then $c'\in C$) and
invariant under finite changes. To see this, start with a code $C_1$
of $\kk$, i.e. a Borel cofinal subset of $\lbf$. Let
\begin{eqnarray*}
\Phi =\big\{ (F,G) & : & (F\subseteq \lbf) \ \wedge \ (G\cap C_1=\varnothing)
\ \wedge \\
& & [\forall L,M \ (L\in F \ \wedge \ M\subseteq L\Rightarrow M\notin G)]
\ \wedge \\
& & [\forall L,M,s \ (L\in F \ \wedge \ (L\bigtriangleup M=s)\Rightarrow
M\notin G)]\big\}.
\end{eqnarray*}
Let also $A=\{ N : \exists L\in C_1 \ \exists s\in [\nn]^{<\nn} \
\exists M\in[L] \text{ with } N\bigtriangleup M=s \}$. Then $A$ is
$\SB^1_1$ and clearly $\Phi(A,\sim A)$. As $\Phi$ is $\PB^1_1$
on $\SB^1_1$, hereditary and continuous upward in the second variable,
by the dual form of the second reflection theorem (see \cite{Kechris},
Theorem 35.16), there exists $C\supseteq A$ Borel with $\Phi(C,\sim C)$.
Clearly $C$ is as desired.\\
(3) We notice that the idea of coding subsets of function spaces
by converging sequences appears also in \cite{Be}, where a
representation result of $\SB^1_2$ subsets of $C([0,1])$
is proved.
\end{rem}


\section{A boundedness result}

\subsection{Determining $\alpha(f)$ by compact sets}

Let $X$ be a Polish space and $f:X\to\rr$ a Baire-1 function.
The aim of this subsection is to show that the value $\alpha(f)$
is completely determined by the derivatives taken over compact
subsets of $X$ (notice that this is trivial if $X$ is compact
metrizable). Specifically we have the following.
\begin{prop}
\label{p2} Let $X$ be a Polish space, $f:X\to\rr$ Baire-1 and
$a<b$ reals. Then
$\alpha(f,a,b)=\sup\{ \alpha(f,K,a,b): K\subseteq X \text{ compact} \}$.
\end{prop}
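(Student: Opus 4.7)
The plan is to prove both inequalities separately; only the $\leq$ direction is non-trivial. Fix a compatible complete metric $d$ on $X$ and write $A=[f<a]$, $B=[f>b]$.

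The inequality $\sup_K\alpha(f,K,a,b)\leq\alpha(f,a,b)$ is immediate from monotonicity of the $(A,B)$-derivative: if $F_1\subseteq F_2$ are closed in $X$, then $F_1\cap A\subseteq F_2\cap A$, so $\overline{F_1\cap A}\subseteq\overline{F_2\cap A}$, and likewise for $B$, giving $(F_1)'_{A,B}\subseteq(F_2)'_{A,B}$. A transfinite induction (using that closures in $K\subseteq X$ of subsets of $K$ agree with closures in $X$ when $K$ is closed) yields $K^{(\xi)}_{A,B}\subseteq X^{(\xi)}_{A,B}$ for every compact $K$ and every $\xi$, whence $\alpha(f,K,a,b)\leq\alpha(f,a,b)$.

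For the reverse inequality, it suffices to establish the following claim by transfinite induction on $\xi<\omega_1$: for every $x\in X^{(\xi)}_{A,B}$ and every $\ee>0$ there exists a compact set $K\subseteq \overline{B}(x,\ee)$ with $x\in K^{(\xi)}_{A,B}$. The case $\xi=0$ is $K=\{x\}$. For the successor step $\xi+1$, the point $x$ lies in $\overline{X^{(\xi)}_{A,B}\cap A}\cap\overline{X^{(\xi)}_{A,B}\cap B}$, so we may pick sequences $a_n\in X^{(\xi)}_{A,B}\cap A$ and $b_n\in X^{(\xi)}_{A,B}\cap B$ with $a_n,b_n\to x$. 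The inductive hypothesis supplies compact sets $K_{a_n}\subseteq \overline{B}(a_n,\delta_n)$ and $K_{b_n}\subseteq \overline{B}(b_n,\delta_n)$ with $a_n\in(K_{a_n})^{(\xi)}_{A,B}$ and $b_n\in(K_{b_n})^{(\xi)}_{A,B}$, where $\delta_n$ is chosen so small that $\delta_n\to 0$ and $K_{a_n},K_{b_n}\subseteq\overline{B}(x,\ee)$. I then set $K=\{x\}\cup\bigcup_n(K_{a_n}\cup K_{b_n})$; by the previous monotonicity, $a_n,b_n\in K^{(\xi)}_{A,B}$, which places $x$ in $K^{(\xi+1)}_{A,B}$. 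For a limit $\xi$, I fix $\xi_n\nearrow\xi$ and apply the hypothesis to get compacta $K_n\subseteq\overline{B}(x,\ee/2^n)$ with $x\in(K_n)^{(\xi_n)}_{A,B}$; then $K=\{x\}\cup\bigcup_n K_n$ satisfies $x\in K^{(\xi_n)}_{A,B}$ for every $n$ by monotonicity, hence $x\in\bigcap_n K^{(\xi_n)}_{A,B}=K^{(\xi)}_{A,B}$.

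Granting the claim, the reverse inequality follows: for each $\xi<\alpha(f,a,b)$ there is some $x\in X^{(\xi)}_{A,B}$, and the claim produces a compact $K$ with $x\in K^{(\xi)}_{A,B}$, so $\alpha(f,K,a,b)>\xi$. Taking the supremum over $\xi<\alpha(f,a,b)$ (which is a countable ordinal since $f$ is Baire-$1$ by Proposition \ref{kl}) yields $\sup_K\alpha(f,K,a,b)\geq\alpha(f,a,b)$.

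The main obstacle is the compactness of the unions $K$ constructed in the successor and limit steps. The point is that the radii $\delta_n$ (respectively $\ee/2^n$) are forced to tend to $0$: any sequence in $K$ that does not eventually lie in one of the pieces must, by the shrinking diameter condition, accumulate at $x$, which is the only new limit point introduced. This is exactly what ensures $K$ is compact while still witnessing the derivative rank at $x$.
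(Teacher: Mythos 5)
Your proof is correct and follows essentially the same route as the paper: a transfinite induction producing, for each $x\in X^{(\xi)}_{A,B}$, a compact witness of the form $\{x\}\cup\bigcup_n K_n$ whose pieces shrink to $x$, with monotonicity of the derivative transferring the rank of each piece to the union. The paper merely organizes this as two lemmas (a localization lemma for $F\cap\overline{B(x,\ee)}$ plus a countable-compact-witness lemma with a case analysis on the value of $f(x)$), whereas you fold the localization into the induction hypothesis via the radius parameter; the substance is identical.
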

The proof of Proposition \ref{p2} is an immediate consequence of
the following lemmas. In what follows, all balls in $X$ are taken
with respect to some compatible complete metric $\rho$ of $X$.
\begin{lem}
\label{al1} Let $X$, $f$ and $a<b$ be as in Proposition \ref{p2}.
Let also $F\subseteq X$ closed, $x\in X$ and $\xi<\omega_1$ be
such that $x\in F^{(\xi)}_{f,a,b}$. Then for every $\ee>0$,
if we let $C=F\cap \overline{B(x,\ee)}$, we have $x\in C^{(\xi)}_{f,a,b}$.
\end{lem}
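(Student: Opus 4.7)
The proof I have in mind is a transfinite induction on $\xi$, using only two soft facts about the derivative $F \mapsto F'_{A,B} = \overline{F \cap A} \cap \overline{F \cap B}$: that it is monotone with respect to inclusion, and that membership of a specific point $x$ in $\overline{F \cap A}$ and in $\overline{F \cap B}$ is witnessed by sequences that converge to $x$ and hence eventually lie in any prescribed neighborhood of $x$.

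The base case $\xi = 0$ is trivial, since $x \in F^{(0)}_{f,a,b} = F$ and $x \in \overline{B(x,\ee)}$ give $x \in C$. For the limit step $\xi = \lambda$, one uses the definition $F^{(\lambda)}_{f,a,b} = \bigcap_{\eta < \lambda} F^{(\eta)}_{f,a,b}$ and applies the inductive hypothesis for each $\eta < \lambda$ with the same ball; this yields $x \in C^{(\eta)}_{f,a,b}$ for every $\eta < \lambda$, hence $x \in C^{(\lambda)}_{f,a,b}$. This step is essentially free.

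The successor case $\xi = \eta + 1$ is the only interesting one. Assume $x \in F^{(\eta+1)}_{f,a,b}$, so there exist sequences $y_n \to x$ with $y_n \in F^{(\eta)}_{f,a,b} \cap [f<a]$ and $z_n \to x$ with $z_n \in F^{(\eta)}_{f,a,b} \cap [f>b]$. The plan is to show $y_n \in C^{(\eta)}_{f,a,b}$ for all sufficiently large $n$, and symmetrically for $z_n$. For large enough $n$ we have $\rho(y_n,x) < \ee$, so $\delta_n := \ee - \rho(y_n,x) > 0$ and $\overline{B(y_n,\delta_n)} \subseteq \overline{B(x,\ee)}$. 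Applying the inductive hypothesis at the point $y_n$ (with the same closed set $F$, the ordinal $\eta$, and the radius $\delta_n$), we obtain $y_n \in \bigl(F \cap \overline{B(y_n,\delta_n)}\bigr)^{(\eta)}_{f,a,b}$. Since $F \cap \overline{B(y_n,\delta_n)} \subseteq C$ and the iterated derivative is monotone in its underlying closed set (an immediate induction from monotonicity of $F \mapsto F'_{A,B}$), this gives $y_n \in C^{(\eta)}_{f,a,b}$. Combined with $y_n \in [f<a]$, we conclude $x = \lim y_n \in \overline{C^{(\eta)}_{f,a,b} \cap [f<a]}$, and the analogous argument with $z_n$ gives $x \in \overline{C^{(\eta)}_{f,a,b} \cap [f>b]}$; so $x \in C^{(\eta+1)}_{f,a,b}$.

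The only subtlety, and what one must be careful about, is precisely this bookkeeping in the successor step: the inductive hypothesis must be invoked at the moving points $y_n$ and $z_n$ with radii $\delta_n$ small enough to keep the localizing ball inside $\overline{B(x,\ee)}$, after which monotonicity of the iterated derivative in its underlying closed set closes the loop. That monotonicity itself should be recorded (by a short separate transfinite induction) and then used silently.
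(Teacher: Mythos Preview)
Your argument is correct, but it is organized differently from the paper's. The paper strengthens the inductive hypothesis to the set inclusion
\[
F^{(\xi)}_{f,a,b}\cap U \subseteq C^{(\xi)}_{f,a,b},
\]
where $U=B(x,\ee)$ is the \emph{open} ball. With this formulation the successor step is a one-liner: if $y\in F^{(\xi+1)}\cap U$, then because $U$ is open one has $y\in\overline{F^{(\xi)}\cap U\cap[f<a]}\cap\overline{F^{(\xi)}\cap U\cap[f>b]}$, and the inductive inclusion immediately gives $y\in C^{(\xi+1)}$. No recentering, no auxiliary monotonicity lemma for the iterated derivative is needed.

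Your version keeps the pointwise statement and, in the successor step, invokes the inductive hypothesis at the moving points $y_n,z_n$ with shrunk radii $\delta_n$, then uses monotonicity of $G\mapsto G^{(\eta)}_{f,a,b}$ to pass from $F\cap\overline{B(y_n,\delta_n)}$ to $C$. This is perfectly valid (and your triangle-inequality check $\overline{B(y_n,\delta_n)}\subseteq\overline{B(x,\ee)}$ is the right one), but it costs the extra side induction you flag. The paper's packaging buys a shorter proof; yours is a bit more hands-on but equally sound.
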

\begin{proof}
Fix $F$ and $\ee$ as above. For notational simplicity let $U=B(x,\ee)$
and $C=F\cap \overline{B(x,\ee)}$. By induction we shall show that
\[ F^{(\xi)}\cap U \subseteq C^{(\xi)} \]
where $F^{(\xi)}=F^{(\xi)}_{f,a,b}$ and similarly for $C$. This clearly
implies the lemma. For $\xi=0$ is straightforward. Suppose that the
lemma is true for every $\xi<\zeta$. Assume that $\zeta=\xi+1$ is a
successor ordinal. Let $y\in F^{(\xi+1)}\cap U$. As $U$ is open, we have
\[ y \in \overline{F^{(\xi)}\cap U\cap [f<a]} \cap \overline{F^{(\xi)}\cap
U\cap [f>b]}. \]
By the inductive assumption we get that
\[ y\in \overline{ C^{(\xi)}\cap [f<a]} \cap \overline{ C^{(\xi)}\cap
[f>b]}=C^{(\xi+1)} \]
which proves the case of successor ordinals. If $\zeta$ is limit, then
\[ F^{(\zeta)}\cap U= \bigcap_{\xi<\zeta} F^{(\xi)}\cap U \subseteq
\bigcap_{\xi<\zeta} C^{(\xi)}=C^{(\zeta)} \]
and the lemma is proved.
\end{proof}
\begin{lem}
\label{al2} Let $X$, $f$ and $a<b$ be as in Proposition \ref{p2}.
Let also $F\subseteq X$ closed, $x\in X$ and $\xi<\omega_1$ be such
that $x\in F^{(\xi)}_{f,a,b}$. Then there exists $K\subseteq F$
countable compact such that $x\in K^{(\xi)}_{f,a,b}$.
\end{lem}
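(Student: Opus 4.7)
The plan is to proceed by transfinite induction on $\xi$, using Lemma \ref{al1} at each step to ensure that the countable compact $K$ we construct is actually compact and not just a countable union of compacta. The base case $\xi=0$ is trivial: just take $K=\{x\}$.

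For the successor case $\xi=\eta+1$, the assumption $x\in F^{(\eta+1)}_{f,a,b}$ gives sequences $(y_n)\subseteq F^{(\eta)}_{f,a,b}\cap[f<a]$ and $(z_n)\subseteq F^{(\eta)}_{f,a,b}\cap[f>b]$ with $y_n\to x$ and $z_n\to x$. For each $n$, choose $\ee_n\leq \rho(x,y_n)$ and apply Lemma \ref{al1} to $F$ at the point $y_n$ to conclude $y_n\in (F\cap\overline{B(y_n,\ee_n)})^{(\eta)}_{f,a,b}$. The inductive hypothesis, applied to the closed set $F\cap\overline{B(y_n,\ee_n)}\subseteq F$ at the point $y_n$, produces a countable compact $K_n\subseteq F\cap\overline{B(y_n,\ee_n)}$ with $y_n\in K_n^{(\eta)}_{f,a,b}$. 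Do the same for $z_n$ to get countable compacta $L_n\subseteq F$ with $z_n\in L_n^{(\eta)}_{f,a,b}$ and $L_n\subseteq\overline{B(z_n,\ee_n')}$ for suitably small $\ee_n'$. Set $K=\{x\}\cup\bigcup_n K_n\cup\bigcup_n L_n$; by the choice of radii the diameters of the pieces tend to $0$ around $x$, so $K$ is compact, and it is clearly countable and contained in $F$. By monotonicity of the derivative in the ambient closed set, $K_n^{(\eta)}_{f,a,b}\subseteq K^{(\eta)}_{f,a,b}$, hence $y_n\in K^{(\eta)}_{f,a,b}\cap [f<a]$, and similarly $z_n\in K^{(\eta)}_{f,a,b}\cap[f>b]$; passing to the limit gives $x\in K^{(\eta+1)}_{f,a,b}$.

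For a limit ordinal $\xi=\lambda$, fix a sequence $\eta_n\nearrow\lambda$ and positive reals $\ee_n\to 0$. Since $x\in F^{(\eta_n)}_{f,a,b}$ for every $n$, Lemma \ref{al1} yields $x\in (F\cap\overline{B(x,\ee_n)})^{(\eta_n)}_{f,a,b}$, and the inductive hypothesis produces a countable compact $K_n\subseteq F\cap\overline{B(x,\ee_n)}$ with $x\in K_n^{(\eta_n)}_{f,a,b}$. Put $K=\bigcup_n K_n$. Since the $K_n$ are eventually contained in arbitrarily small balls around $x$, any sequence in $K$ either lies in finitely many of the $K_n$ (hence in a compact set) or converges to $x$; thus $K$ is compact. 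For every $\eta<\lambda$ pick $n$ with $\eta<\eta_n$; monotonicity gives $x\in K_n^{(\eta_n)}_{f,a,b}\subseteq K^{(\eta_n)}_{f,a,b}\subseteq K^{(\eta)}_{f,a,b}$, so $x\in K^{(\lambda)}_{f,a,b}$.

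The only genuine obstacle is compactness of the resulting set: a countable union of compacta is in general not compact, so the inductive constructions must be localized to shrinking balls around $x$ (in the limit case) or around the approximating sequence (in the successor case). Lemma \ref{al1} is precisely the tool that makes this localization possible without losing derivative rank, and this is why the proof hinges on that lemma.
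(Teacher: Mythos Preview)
Your argument follows the same transfinite-induction scheme as the paper and is essentially correct, but there is one small oversight in the successor step worth flagging. You implicitly assume that both approximating sequences $(y_n)$ and $(z_n)$ can be taken with $y_n\neq x$ and $z_n\neq x$; otherwise the choice $\ee_n\le\rho(x,y_n)$ forces $\ee_n=0$ and Lemma~\ref{al1} becomes vacuous. This fails precisely when $f(x)<a$ (then $x$ may be isolated in $F^{(\eta)}\cap[f<a]$) or $f(x)>b$, which is exactly why the paper splits into the three alternatives (A1)--(A3). The repair is immediate: in case $f(x)<a$ one drops the $(y_n)$-construction entirely and observes that $x$ itself lies in $K^{(\eta)}\cap[f<a]$ once the $z_n$'s force $x\in K^{(\eta)}$, and symmetrically for $f(x)>b$.

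Your treatment of the limit case differs from the paper's: you apply the inductive hypothesis directly at $x$ inside shrinking balls $\overline{B(x,\ee_n)}$, whereas the paper folds the limit case into the same approximating-sequence mechanism by choosing successor ordinals $\xi_n\nearrow\zeta$ and points $y_n\in F^{(\xi_n)}$. Both routes work; yours is arguably more transparent, while the paper's has the virtue of handling successor and limit stages with a single construction.
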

\begin{proof}
Again for notational simplicity for every $C\subseteq X$ closed
and every $\xi<\omega_1$ we let $C^{(\xi)}=C^{(\xi)}_{f,a,b}$.
The proof is by induction on countable ordinals, as before. For $\xi=0$
the lemma is obviously true. Suppose that the lemma has been proved
for every $\xi<\zeta$. Let $F\subseteq X$ closed and $x\in F^{(\zeta)}$.
Notice that one of the following alternatives must occur.
\begin{enumerate}
\item[(A1)] $f(x)<a$ and there exists a sequence $(y_n)_n$ such that
$y_n\neq y_m$ for $n\neq m$, $f(y_n)>b$, $y_n\in F^{(\xi_n)}$ and $y_n\to x$;
\item[(A2)] $f(x)>b$ and there exists a sequence $(z_n)_n$ such that
$z_n\neq z_m$ for $n\neq m$, $f(z_n)<a$, $z_n\in F^{(\xi_n)}$ and $z_n\to x$;
\item[(A3)] there exist two distinct sequences $(y_n)_n$ and $(z_n)_n$
such that $y_n\neq y_m$ and $z_n\neq z_m$ for $n\neq m$, $f(y_n)<a$,
$f(z_n)>b$,  $y_n, z_n\in F^{(\xi_n)}$ and $y_n\to x$, $z_n\to x$,
\end{enumerate}
where above the sequence $(\xi_n)_n$ of countable ordinals is as follows.
\begin{enumerate}
\item[(C1)] If $\zeta=\xi+1$, then $\xi_n=\xi$ for every $n$.
\item[(C2)] If $\zeta$ is limit, then $(\xi_n)_n$ is an increasing
sequence of successor ordinals with $\xi_n\nearrow \zeta$.
\end{enumerate}
We shall treat the alternative (A1) (the other ones are similar).
Let $(r_n)_n$ be a sequence of positive reals such that
$\overline{B(y_n,r_n)}\cap\overline{B(y_m,r_m)}=\varnothing$ if $n\neq m$
and $x\notin \overline{B(y_n,r_n)}$ for every $n$. Let
$C_n=F\cap\overline{B(y_n,r_n)}$. By Lemma \ref{al1}, we get that
$y_n\in C^{(\xi_n)}_n$. By the inductive assumption, there exists
$K_n\subseteq C_n \subseteq F_n$ countable compact such that
$y_n\in K^{(\xi_n)}_n$. Finally let $K=\{x\}\cup (\bigcup_n K_n)$.
Then $K$ is countable compact and it is easy to see that
$x\in K^{(\zeta)}$.
\end{proof}
\begin{rem}
Notice that the proof of Lemma \ref{al2} actually shows that
\[ \alpha(f,a,b)=\sup\{ \alpha(f,K,a,b): K\subseteq X
\text{ countable compact} \}. \]
Moreover observe that if $\alpha(f,a,b)$ is a successor ordinal,
then the above supremum is attainted.
\end{rem}

\subsection{The main result}

This subsection is devoted to the proof of the following result.
\begin{thm}
\label{t4} Let $X$ be a Polish space and $\bs$ a sequence of Borel
real-valued functions on $X$. Let
\[ \lbf^1=\{ L\in[\nn]: (f_n)_{n\in L} \text{ is pointwise convergent
to a Baire-1 function}\}. \]
Then for every $C\subseteq \lbf^1$ Borel, we have
\[ \sup\{ \alpha(f_L): L\in C\} <\omega_1 \]
where, for every $L\in C$, $f_L$ denotes the pointwise
limit of the sequence $(f_n)_{n\in L}$.
\end{thm}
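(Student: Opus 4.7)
The plan is to realize $L\mapsto \alpha(f_L)$ as a $\PB^1_1$-rank on the co-analytic set $\lbf^1$, so that Theorem \ref{t4} follows at once from the standard boundedness principle for $\PB^1_1$-ranks applied to the Borel (hence analytic) set $C$.

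First I would check that $\lbf^1$ is $\PB^1_1$. Pointwise convergence of $(f_n)_{n\in L}$ at a single $x\in X$ is Borel in $L$ (it amounts to a Cauchy condition on countably many arithmetic statements involving the $f_n(x)$), so imposing it uniformly in $x$ is $\PB^1_1$. Once $(f_n)_{n\in L}$ converges pointwise, Baire-1-ness of the limit is, by Proposition \ref{kl}, equivalent to $\alpha(f_L)<\omega_1$, which will be the expected co-analytic condition once the rank is shown to be a $\PB^1_1$-rank. Thus the theorem reduces to constructing this rank.

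The heart of the proof would be the construction, for each pair $a<b$ of rationals, of a Borel map $L\mapsto T_L^{a,b}$ from $\lbf^1$ into $\tr(D\times\{0,1\})$, where $D=\{d_0,d_1,\ldots\}$ is a fixed countable dense subset of $X$, such that $o(T_L^{a,b})=\alpha(f_L,a,b)$. The nodes of $T_L^{a,b}$ would code finite approximations of the iterated separation derivative: a node is a finite sequence $\bigl((d_{n_0},\ee_0),\ldots,(d_{n_{k-1}},\ee_{k-1})\bigr)$ in which the $d_{n_j}$'s form a nested, diameter-shrinking configuration of points in $D$, each $\ee_j\in\{0,1\}$ selects a side ($f_L(d_{n_j})<a$ if $\ee_j=0$, $f_L(d_{n_j})>b$ if $\ee_j=1$), and the branching/alternation pattern mirrors the iterated closure condition $F'_{A,B}=\overline{F\cap A}\cap\overline{F\cap B}$. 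Because $X$ is Polish, Proposition \ref{p2} and, critically, Lemma \ref{al2} (which reduces $\alpha(f,a,b)$ to countable compact subsets of $X$) guarantee that such finitary witnesses drawn from $D$ suffice to compute $\alpha(f_L,a,b)$; this is what forces $o(T_L^{a,b})=\alpha(f_L,a,b)$, so in particular $T_L^{a,b}\in\wf(D\times\{0,1\})$. Borel measurability of $L\mapsto T_L^{a,b}$ is immediate from Lemma \ref{cl1}, since membership of a given node depends only on finitely many statements of the form $f_L(d)<a$ and $f_L(d)>b$ with $d\in D$, each Borel in $L$.

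Granting these trees, the composition principle recalled in Section 2 (Borel reduction followed by a $\PB^1_1$-rank is a $\PB^1_1$-rank) shows that $L\mapsto o(T_L^{a,b})=\alpha(f_L,a,b)$ is a $\PB^1_1$-rank on $\lbf^1$ for each fixed $a<b$. Boundedness of $\PB^1_1$-ranks on analytic sets then gives, for the Borel $C\subseteq \lbf^1$,
\[ \sup\{\alpha(f_L,a,b):L\in C\}<\omega_1. \]
Since $\alpha(f_L)=\sup\{\alpha(f_L,a,b):a<b\in\qq\}$ and a countable supremum of countable ordinals is countable, the desired $\sup\{\alpha(f_L):L\in C\}<\omega_1$ follows. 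The main obstacle is the faithful construction of $T_L^{a,b}$: one must simultaneously keep the indexing set countable (so that $o$ is a genuine $\PB^1_1$-rank), keep membership Borel in $L$, and establish the \emph{equality} $o(T_L^{a,b})=\alpha(f_L,a,b)$ rather than a weak inequality. Proposition \ref{p2} together with the successor-case refinement noted after Lemma \ref{al2} are precisely the ingredients that make this reduction to countable data work.
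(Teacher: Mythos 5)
Your overall plan runs into an obstruction that the paper itself makes explicit: the map $L\mapsto\alpha(f_L,a,b)$ cannot in general be a $\PB^1_1$-rank on the co-analytic set $\lbf^1$. Indeed, when $\bs$ is dense in a separable Rosenthal compact that is not first countable, $\lbf=\lbf^1$ is $\PB^1_1$-complete (Proposition \ref{newp1}), while Theorem \ref{t5} (a consequence of the very statement you are proving, via Debs' theorem) gives $\sup\{\alpha(f_L):L\in\lbf\}<\omega_1$; a $\PB^1_1$-rank bounded on its domain forces that domain to be Borel, so no such rank can agree with $\alpha(\cdot,a,b)$ on $\lbf^1$. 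This is precisely the content of the unnamed Proposition following Theorem \ref{t5}. Hence the theorem cannot ``follow at once from boundedness'' via a global rank on $\lbf^1$; the paper instead works \emph{locally over each Borel $C$}: it defines the derivative $\mathbb{D}(L,K)=\overline{K\cap[f_L<a]}\cap\overline{K\cap[f_L>b]}$ on $C\times K(X)$, invokes the parameterized-rank theorem (Theorem \ref{tp}), checks $\Omega_{\mathbb{D}}=C\times K(X)$, and only then applies boundedness, finishing with Proposition \ref{p2} to pass from compacta back to $X$.

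The second, equally serious, gap is in the construction of $T_L^{a,b}$. If nodes are drawn from a fixed countable dense set $D$, membership is indeed Borel in $L$, but the tree does not compute $\alpha(f_L,a,b)$: the derivative $F'=\overline{F\cap[f_L<a]}\cap\overline{F\cap[f_L>b]}$ takes closures of sets defined by the merely Baire-1 function $f_L$, whose points need not be approximable from $D\cap[f_L<a]$ (consider $f_L=\chi_{\{x_0\}}$ with $x_0\notin D$). Lemma \ref{al2} reduces $\alpha$ to countable compacta \emph{depending on $f_L$}, not to a fixed countable set, so it does not rescue the construction. If instead you let witnesses range over all of $X$ (say via basic balls), a condition such as ``$B\cap F\cap[f_L<a]\neq\varnothing$'' is only $\SB^1_1$ in $L$, and Borelness of $L\mapsto T_L^{a,b}$ is lost. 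Resolving exactly this tension is the technical heart of the paper's argument: because $f_L$ is Baire-1 the relevant sections are $K_\sigma$, and the Arsenin--Kunugui theorem (Lemma \ref{ak}) makes the closure operation Borel in the parameters. Your outline contains neither this idea nor a substitute for it.
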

For the proof of Theorem \ref{t4} we will need the following theorem,
which gives us a way of defining parameterized $\PB^1_1$-ranks
(see \cite{Kechris}, page 275).
\begin{thm}
\label{tp} Let $Y$ be a standard Borel space, $X$ a Polish space
and $\mathbb{D}:Y\times K(X)\to K(X)$ be a Borel map such that
for every $y\in Y$, $\mathbb{D}_y$ is a derivative on $K(X)$.
Then the set
\[ \Omega_{\mathbb{D}}=\{ (y,K): \mathbb{D}_y^{(\infty)}(K)=\varnothing \} \]
is $\PB^1_1$ and the map $(y,K)\to |K|_{\mathbb{D}_y}$ is a
$\PB^1_1$-rank on $\Omega_{\mathbb{D}}$.
\end{thm}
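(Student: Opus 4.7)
My strategy is to construct a Borel map $\Phi\colon Y\times K(X)\to\tr$, $(y,K)\mapsto T(y,K)$, that reduces $\Omega_{\mathbb{D}}$ to $\wf$ in a rank-preserving fashion: $(y,K)\in\Omega_{\mathbb{D}}$ iff $T(y,K)\in\wf$, and $|K|_{\mathbb{D}_y}=o(T(y,K))$ whenever $(y,K)\in\Omega_{\mathbb{D}}$. Granted such a $\Phi$, both conclusions are immediate from the corresponding facts for $\wf$ and $o$ recalled at the end of Section~2: $\Omega_{\mathbb{D}}=\Phi^{-1}(\wf)$ is co-analytic as the Borel preimage of a $\PB^1_1$ set, and the $\SB^1_1$, $\PB^1_1$ relations witnessing that $o$ is a $\PB^1_1$-rank on $\wf$ pull back through $\Phi\times\Phi$ to witness that $(y,K)\mapsto|K|_{\mathbb{D}_y}$ is a $\PB^1_1$-rank on $\Omega_{\mathbb{D}}$.

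\textbf{Construction of $T(y,K)$.} Fix a countable basis $(V_n)_n$ of $X$ closed under finite intersection, together with a compatible complete metric. A node $s=(n_0,\dots,n_{k-1})\in\nn^{<\nn}$ is declared to lie in $T(y,K)$ when the $V_{n_i}$ are nested and shrinking ($\overline{V_{n_{i+1}}}\subseteq V_{n_i}$, $\mathrm{diam}(V_{n_i})<2^{-i}$) and each piece $K\cap\overline{V_{n_i}}$ supplies a non-empty descent step under $\mathbb{D}_y$: concretely, each $K\cap\overline{V_{n_{i+1}}}$ is a non-empty subset of $\mathbb{D}_y(K\cap\overline{V_{n_i}})$. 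Since $\mathbb{D}$ is Borel into the standard Borel space $K(X)$ (with the Vietoris topology) and the operations of intersection with a fixed closed set and inclusion between compacta are Borel in $K(X)$, the condition \textit{``}$s\in T(y,K)$\textit{''} is Borel in $(y,K)$, so $\Phi$ is Borel.

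\textbf{Verification.} Along an infinite branch $\sigma\in[T(y,K)]$, the intersection $\bigcap_i(K\cap\overline{V_{\sigma(i)}})$ is a single point $x_\sigma$, and a transfinite induction on $\xi$ using the monotonicity of $\mathbb{D}_y$ together with compactness yields $x_\sigma\in\mathbb{D}_y^{(\xi)}(K)$ for every countable $\xi$, hence $x_\sigma\in\mathbb{D}_y^{(\infty)}(K)$. Conversely, any $x\in\mathbb{D}_y^{(\infty)}(K)$ gives rise to an infinite branch by greedily choosing $V_{n_k}$ of ever-smaller diameter around $x$, using at each step that $x\in\mathbb{D}_y^{(\infty)}(K\cap\overline{V_{n_{k-1}}})\subseteq\mathbb{D}_y(K\cap\overline{V_{n_{k-1}}})$. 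This establishes the well-foundedness equivalence. The rank equality is then obtained by a simultaneous transfinite induction showing that, for each $s\in T(y,K)$, the order of the subtree $T(y,K)_s$ equals $|K\cap\overline{V_{n_{k-1}}}|_{\mathbb{D}_y}$; successor and limit steps of the derivation are matched against the branching structure of the tree, using that $\mathbb{D}_y^{(\lambda)}=\bigcap_{\xi<\lambda}\mathbb{D}_y^{(\xi)}$ at limits.

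\textbf{Main obstacle.} The principal difficulty is the rank equality at limit ordinals. The well-foundedness argument easily delivers $o(T(y,K))\geq|K|_{\mathbb{D}_y}$; the reverse inequality requires verifying that, at a limit $\lambda$, the supremum of orders of subtrees attached to refinements of a given basic open actually realizes the ordinal level of the iterated derivative at that open. This is precisely the delicate part of the unparameterized argument (Theorem~34.10 of \cite{Kechris}); the extra parameter $y$ introduces no new difficulty beyond uniformly Borel bookkeeping, since every derivative-related condition in the construction is Borel in $(y,K)$ by the hypothesis on $\mathbb{D}$.
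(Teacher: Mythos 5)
The paper offers no proof of Theorem \ref{tp}: it is quoted from \cite{Kechris} (page 275, i.e., Theorem 34.10 there together with its parameterized version), so there is no in-paper argument to compare yours with. Judged on its own, your proposal has a genuine gap, and in fact the reduction you construct is false for the class of maps the theorem is about. The only axioms available for $\mathbb{D}_y$ are $\mathbb{D}_y(F)\subseteq F$ and monotonicity, and monotonicity gives $\mathbb{D}_y(K\cap\overline{V})\subseteq\mathbb{D}_y(K)$ --- the opposite of the localization your verification needs. In the converse direction you assert that $x\in\mathbb{D}_y^{(\infty)}(K)$ implies $x\in\mathbb{D}_y^{(\infty)}(K\cap\overline{V})$ for small closed neighbourhoods $\overline{V}$ of $x$; this is a special property of particular derivatives (the paper has to \emph{prove} it for the separation derivative, in Lemma \ref{al1}, using the concrete form of that derivative), not of derivatives in general. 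A concrete counterexample: on $X=[0,1]$ let $D(F)=F$ if $\{0,1\}\subseteq F$ and $D(F)=\varnothing$ otherwise. This is a Borel derivative, and $D^{(\xi)}([0,1])=[0,1]$ for every $\xi$, so $[0,1]\notin\Omega_D$. Yet your tree $T([0,1])$ has no node of length two: as soon as $\mathrm{diam}(V_{n_i})<1$ the set $K\cap\overline{V_{n_i}}$ misses one of the endpoints, so $D(K\cap\overline{V_{n_i}})=\varnothing$ and the required non-empty descent step is impossible. Hence $T([0,1])\in\wf$ while $D^{(\infty)}([0,1])\neq\varnothing$, i.e. $\Phi^{-1}(\wf)\neq\Omega_{\mathbb{D}}$.

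The underlying reason is that $\mathbb{D}_y^{(\infty)}(K)\neq\varnothing$ is a global condition: it is equivalent to the existence of a non-empty compact $L\subseteq K$ with $L\subseteq\mathbb{D}_y(L)$, and such an $L$ need not concentrate at any single point of $X$, so a tree whose infinite branches shrink to points of $X$ cannot detect it. (Even where the equivalence happens to hold there is a second problem with the rank identity: one edge of a branch consumes one application of $\mathbb{D}_y$, so an infinite branch only certifies $x\in\mathbb{D}_y^{(\omega)}(K)$, and your successor-step induction produces $\mathbb{D}_y^{(1+\eta)}$ where $\mathbb{D}_y^{(\eta+1)}$ is needed.) The fixed-point characterization just mentioned does give the $\PB^1_1$-ness of $\Omega_{\mathbb{D}}$ directly, as a universal quantifier over $K(X)$ applied to a Borel matrix, uniformly in $y$. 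For the rank statement the standard argument (Kechris, Theorem 34.10) does not reduce to $\wf$ at all: one compares two derivative processes by running them side by side through an auxiliary Borel derivative on two disjoint copies of $X$, arranged so that membership of the joined compact set in the $\Omega$ of the auxiliary derivative encodes the comparison $|K|_{\mathbb{D}_y}\leq|K'|_{\mathbb{D}_{y'}}$; the required $\SB^1_1$ and $\PB^1_1$ forms of $\leq_\Sigma$ and $\leq_\Pi$ then come from the two complexity bounds on that $\Omega$. A tree argument of your kind can be salvaged for the specific derivative used in Theorem \ref{t4} only after first proving a localization lemma for it --- which is exactly the role of Lemma \ref{al1} --- but that cannot substitute for a proof of Theorem \ref{tp} in the stated generality.
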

We continue with the proof of Theorem \ref{t4}.
\begin{proof}[Proof of Theorem \ref{t4}]
Let $C\subseteq \lbf^1$ Borel arbitrary. Fix $a,b\in\rr$ with $a<b$.
Define $\mathbb{D}:C\times K(X)\to K(X)$ by
\[ \mathbb{D}(L,K)=\overline{K\cap [f_L<a]}\cap \overline{K\cap [f_L>b]} \]
where $f_L$ is the pointwise limit of the sequence $(f_n)_{n\in L}$.
It is clear that for every $L\in C$ the map $K\to \mathbb{D}(L,K)$
is a derivative on $K(X)$ and that $\alpha(f_L,K,a,b)=|K|_{\mathbb{D}_L}$.
We will show that $\mathbb{D}$ is Borel. Define
$A,B \in C\times K(X)\times X$ by
\[ (L,K,x) \in A \Leftrightarrow (x\in K) \ \wedge \ (f_L(x)<a) \]
and
\[ (L,K,x) \in B \Leftrightarrow (x\in K) \ \wedge \ (f_L(x)>b). \]
It is easy to check that both $A$ and $B$ are Borel. Also let
$\tilde{A}, \tilde{B} \subseteq C\times K(X)\times X$ be defined by
\[ (L,K,x)\in \tilde{A} \Leftrightarrow x\in \overline{A_{(L,K)}} \]
and
\[ (L,K,x)\in \tilde{B} \Leftrightarrow x\in \overline{B_{(L,K)}}, \]
where $A_{(L,K)}=\{ x: (L,K,x)\in A\}$ is the section of $A$
(and similarly for $B$). Notice that for every $(L,K)\in C\times K(X)$
we have $\mathbb{D}(L,K)=\tilde{A}_{(L,K)}\cap \tilde{B}_{(L,K)}$.
As $\tilde{A}_{(L,K)}$ and $\tilde{B}_{(L,K)}$ are compact (being
subsets of $K$), by Theorem 28.8 in \cite{Kechris}, it is enough
to show that the sets $\tilde{A}$ and $\tilde{B}$ are Borel. We will
need the following easy consequence of the Arsenin-Kunugui theorem
(the proof is left to the reader).
\begin{lem}
\label{ak} Let $Z$ be a standard Borel space, $X$ a Polish space and
$F\subseteq Z\times X$ Borel with $K_\sg$ sections. Then the set
$\tilde{F}$ defined by
\[ (z,x)\in \tilde{F} \Leftrightarrow x \in \overline{F_z} \]
is a Borel subset of $Z\times X$.
\end{lem}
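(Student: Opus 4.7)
The plan is to express the complement of $\tilde{F}$ as a countable union of Borel rectangles, with the Borelness of the $Z$-side factors coming from the projection form of the Arsenin-Kunugui theorem: the projection of a Borel set with $K_\sigma$ sections is Borel.

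First, I would fix a countable basis $(U_n)_{n\in\nn}$ of $X$. Since a point $x$ fails to lie in $\overline{F_z}$ exactly when some basic open neighborhood of $x$ misses $F_z$, this yields the decomposition
\[ (Z\times X)\setminus \tilde{F} \;=\; \bigcup_{n\in\nn} B_n \times U_n, \qquad B_n \;=\; \{z\in Z : F_z \cap U_n = \varnothing\}. \]
It therefore suffices to prove that each $B_n$ is Borel. I would next observe that $Z\setminus B_n$ is precisely the projection of the Borel set $F\cap (Z\times U_n)$ onto $Z$, so by the Arsenin-Kunugui projection theorem it will be Borel as soon as I verify that $F\cap (Z\times U_n)$ still has $K_\sigma$ sections.

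This last verification is really the only point of substance. The section at $z$ of $F\cap (Z\times U_n)$ is $F_z\cap U_n$; writing $F_z=\bigcup_m K_m$ with each $K_m$ compact, every $K_m\cap U_n$ is an open subset of the compact metrizable space $K_m$. Because open subsets of a metric space are $F_\sigma$ and closed subsets of $K_m$ are compact, each $K_m\cap U_n$ is $\sigma$-compact, and the union over $m$ shows that $F_z\cap U_n$ is $K_\sigma$. Once this preservation under intersection with open cylinders is in hand, Arsenin-Kunugui delivers Borelness of $Z\setminus B_n$ for every $n$, and the displayed decomposition then finishes the proof. The main obstacle is essentially just this $K_\sigma$-section check; the rest is a routine second-countability argument.
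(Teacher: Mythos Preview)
Your argument is correct. The paper does not actually prove this lemma---it simply calls it an ``easy consequence of the Arsenin--Kunugui theorem'' and leaves the proof to the reader---and your decomposition of the complement via a countable basis, together with the check that intersecting with an open cylinder preserves $K_\sigma$ sections, is exactly the natural way to fill in that omitted verification.
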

By our assumptions, for every $L\in C$ the function $f_L$ is Baire-1
and so for every $(L,K)\in C\times K(X)$ the sections $A_{(L,K)}$
and $B_{(L,K)}$ of $A$ and $B$ respectively are $K_\sg$. Hence,
by Lemma \ref{ak}, we get that $\tilde{A}$ and $\tilde{B}$ are Borel.

By the above we conclude that $\mathbb{D}$ is a Borel map. By
Theorem \ref{tp}, the map $(L,K)\to |K|_{\mathbb{D}_L}$ is a
$\PB^1_1$-rank on $\Omega_{\mathbb{D}}$. By Proposition \ref{kl}
and the fact that $C\subseteq \lbf^1$, we get that for every
$(L,K)\in C\times K(X)$ the transfinite sequence
$\big( \mathbb{D}_L^{(\xi)}(K)\big)_{\xi<\omega_1}$ must be
stabilized at $\varnothing$ and so $\Omega_\mathbb{D}=C\times K(X)$.
As $\Omega_{\mathbb{D}}$ is Borel, by boundedness we have
\[ \sup\{ |K|_{\mathbb{D}_L}: (L,K)\in C\times K(X)\}<\omega_1. \]
It follows that
\[ \sup\{ \alpha(f_L,K,a,b): (L,K)\in C\times K(X)\}<\omega_1. \]
By Proposition \ref{p2}, we get
\[ \sup\{ \alpha(f_L,a,b): L\in C\}<\omega_1. \]
This completes the proof of the theorem.
\end{proof}

\subsection{Consequences}

Let us recall some definitions from \cite{ADK}. Let $X$ be a
Polish space, $(f_n)_n$ a sequence of real-valued functions on $X$
and let $\kk$ be the closure of $\{f_n\}_n$ in $\rr^X$. We will
say that $\kk$ is a (separable) quasi-Rosenthal if every accumulation
point of $\kk$ is a Baire-1 function and moreover we will say that
$\kk$ is Borel separable if the sequence $(f_n)_n$ consists of
Borel functions. Combining Theorem \ref{t4} with Corollary \ref{c}
we get the following result of \cite{ADK}.
\begin{thm}
\label{t5} Let $X$ be a Polish space and $\kk$ a Borel separable
quasi-Rosenthal compact on $X$. Then
\[ \sup\{ \alpha(f):f\in Acc(\kk)\}<\omega_1 \]
where $Acc(\kk)$ denotes the accumulation points of $\kk$. In
particular, if $\kk$ is a separable Rosenthal compact on $X$, then
\[ \sup\{ \alpha(f): f\in\kk\}<\omega_1. \]
\end{thm}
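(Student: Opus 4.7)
The strategy is to derive Theorem \ref{t5} directly from Theorem \ref{t4} by producing a Borel set of codes that sees every accumulation point of $\kk$. The bridge is Corollary \ref{c}: although $\lbf$ itself can fail to be Borel, it contains a Borel cofinal subset $C$ whose corresponding pointwise limits exhaust the accumulation points of $\kk$. Once $C$ is seen to lie inside $\lbf^1$, Theorem \ref{t4} supplies the desired uniform bound on the separation rank along $C$, which transfers to the bound for $Acc(\kk)$.

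In detail, I would let $(f_n)_n$ be a dense sequence in $\kk$ consisting of Borel functions, as furnished by Borel separability. Corollary \ref{c} (resting ultimately on Debs' theorem and the Bourgain-Fremlin-Talagrand theorem) then supplies a Borel set $C\subseteq[\nn]$ such that for every $c\in C$ the subsequence $(f_n)_{n\in c}$ converges pointwise to some $f_c\in\kk$ and every $f\in Acc(\kk)$ is of the form $f_c$ for some $c\in C$ (using that a non-isolated $f\in\kk$ is, by Fr\'echetness, the limit of a sequence in $\{f_n\}_n$). The quasi-Rosenthal hypothesis is exactly the assertion that each such $f_c$ is Baire-1, so $C\subseteq \lbf^1$, and Theorem \ref{t4} yields $\sup\{\alpha(f_c):c\in C\}<\omega_1$. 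Since every $f\in Acc(\kk)$ is of this form, the first assertion follows.

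For the "in particular" clause under the Rosenthal hypothesis, I would handle the isolated points of $\kk$ separately: they form an at most countable subset of $\{f_n\}_n$, each $f_n$ is Baire-1 by the Rosenthal assumption and so has $\alpha(f_n)<\omega_1$ by Proposition \ref{kl}, and a countable supremum of countable ordinals is countable because $\omega_1$ has uncountable cofinality. A slicker alternative is to pad the dense sequence so that every $f_n$ appears infinitely often, which forces every point of $\kk$ to be a pointwise limit of some subsequence and hence to equal $f_c$ for some $c\in C$, absorbing the isolated points into the previous bound.

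The only step with genuine content has already been discharged inside Theorem \ref{t4} (the parameterized $\PB^1_1$-rank machinery together with boundedness) and Corollary \ref{c} (Debs' theorem). The one point worth pausing over in the plan is verifying that the codes supplied by Corollary \ref{c} really do parameterize \emph{every} accumulation point rather than merely a cofinal family of subsequences; but this is built into the statement of Corollary \ref{c}, so what remains here is just a short bookkeeping step.
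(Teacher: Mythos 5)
Your proof is correct and follows exactly the route the paper intends: the paper derives Theorem \ref{t5} precisely by combining Corollary \ref{c} (the Borel code set $C$, which lies in $\lbf^1$ by the quasi-Rosenthal hypothesis and exhausts the accumulation points) with the boundedness of Theorem \ref{t4}. Your additional bookkeeping for the isolated points in the ``in particular'' clause is a sound way to fill in the detail the paper leaves implicit.
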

Besides boundedness, the implications of Theorem \ref{t4} and the
relation between the separation rank and the Borelness of $\lbf$
are more transparently seen when $X$ is a compact metrizable space.
In particular we have the following.
\begin{prop}
Let $X$ be a compact metrizable space and $\kk$ a separable
Rosenthal compact on $X$. Let $\bs$ be a dense sequence in
$\kk$ and $a<b$ reals. Then the map $L\to \alpha(f_L,a,b)$
is a $\PB^1_1$-rank on $\lbf$ if and only if the set $\lbf$
is Borel.
\end{prop}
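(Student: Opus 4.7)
The plan is to treat the equivalence as two separate arguments, reusing the machinery already in place: the parameterized rank of Theorem \ref{tp}, the Borel derivative built in the proof of Theorem \ref{t4}, and the Borel cofinal subset of $\lbf$ from Corollary \ref{c}.

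For the direction ($\Leftarrow$), I would assume $\lbf$ is Borel and specialize the proof of Theorem \ref{t4} to $C:=\lbf$. Since $X$ is compact, $\alpha(f_L,a,b)=\alpha(f_L,X,a,b)$. That proof shows the derivative $\mathbb{D}:\lbf\times K(X)\to K(X)$ given by $\mathbb{D}(L,K)=\overline{K\cap[f_L<a]}\cap\overline{K\cap[f_L>b]}$ is Borel, so Theorem \ref{tp} makes $(L,K)\mapsto|K|_{\mathbb{D}_L}$ a $\PB^1_1$-rank on $\Omega_{\mathbb{D}}$. The Bourgain--Fremlin--Talagrand theorem together with Proposition \ref{kl} forces $\Omega_{\mathbb{D}}=\lbf\times K(X)$. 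The continuous map $L\mapsto(L,X)$ is then a Borel reduction of $\lbf$ to $\Omega_{\mathbb{D}}$, and its composition with the rank yields $L\mapsto|X|_{\mathbb{D}_L}=\alpha(f_L,a,b)$; the composition property of $\PB^1_1$-ranks noted in the preliminaries makes this a $\PB^1_1$-rank on $\lbf$.

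For ($\Rightarrow$), I would assume that $\phi(L):=\alpha(f_L,a,b)$ is a $\PB^1_1$-rank on $\lbf$ and show $\phi$ is bounded. Choose a Borel cofinal $C\subseteq\lbf$ via Corollary \ref{c}; Theorem \ref{t4} applied to $C$ bounds $\xi_0:=\sup\{\alpha(f_L,a,b):L\in C\}<\omega_1$. For an arbitrary $L_0\in\lbf$, cofinality of $C$ furnishes some $L\in[L_0]\cap C$; since $(f_n)_{n\in L}$ is a subsequence of the pointwise convergent $(f_n)_{n\in L_0}$, the two limits coincide and $\phi(L_0)=\phi(L)\leq\xi_0$. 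Thus $\phi$ is bounded by $\xi_0$ on all of $\lbf$, and the standard property that each level set $\{L\in\lbf:\phi(L)\leq\xi\}$ of a $\PB^1_1$-rank is Borel identifies $\lbf$ with its $\xi_0$-level set, so $\lbf$ is Borel.

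The step I expect to require the most care is the propagation in direction ($\Rightarrow$): passing from boundedness of the separation rank on the Borel cofinal $C$ provided by Theorem \ref{t4} to boundedness on all of $\lbf$. It rests on the trivial observation that subsequences of a pointwise convergent sequence share its limit, and this is exactly what lets any $L_0\in\lbf$ inherit the bound already established for its cofinal representative in $C$.
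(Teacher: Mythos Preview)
Your proposal is correct and follows essentially the same approach as the paper. The only cosmetic difference is that the paper argues the direction ($\Rightarrow$) by contrapositive and invokes Theorem~\ref{t5} directly to get $\sup\{\alpha(f_L,a,b):L\in\lbf\}<\omega_1$, whereas you unpack that bound via the Borel cofinal set $C$ and the subsequence observation before concluding that $\lbf$ equals its (Borel) $\xi_0$-level set; the content is the same.
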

\begin{proof}
First assume that $\lbf$ is not Borel. By Theorem \ref{t5},
we have that
\[ \sup\{ \alpha(f_L,a,b): L\in\lbf\}<\omega_1 \]
and so the map $L\to \alpha(f_L,a,b)$ cannot be a $\PB^1_1$-rank
on $\lbf$, as $\lbf$ is $\PB^1_1$-true. Conversely, assume that
$\lbf$ is Borel. By the proof of Theorem \ref{t4}, we have that
the map $(L,K)\to |K|_{\mathbb{D}_L}$ is a $\PB^1_1$-rank on
$\lbf\times K(X)$. It follows that the relation
\[ L_1\preceq L_2 \Leftrightarrow \alpha(f_{L_1},a,b)\leq
\alpha(f_{L_2},a,b)\Leftrightarrow |X|_{\mathbb{D}_{L_1}}
\leq |X|_{\mathbb{D}_{L_2}} \]
is Borel in $\lbf\times \lbf$. This implies that the map
$L\to\alpha(f_L,a,b)$ is a $\PB^1_1$-rank on $\lbf$, as desired.
\end{proof}
\begin{rem}
Although the map $L\to\alpha(f_L,a,b)$ is not always a
$\PB^1_1$-rank on $\lbf$, it is easy to see that it is a
$\PB^1_1$-rank on the codes $C$ of $\kk$, as the relation
\[ c_1\preceq c_2 \Leftrightarrow \alpha(f_{c_1},a,b)\leq
\alpha(f_{c_2},a,b)\Leftrightarrow |X|_{\mathbb{D}_{c_1}}
\leq |X|_{\mathbb{D}_{c_2}} \]
is Borel in $C\times C$ for every pair $a<b$ of reals. Hence,
when $X$ is compact metrizable space, we could say that the
separation rank is a $\PB^1_1$-rank "in the codes".
\end{rem}
We proceed to discuss another application of Theorem \ref{t4} which
deals with the following approximation question in Ramsey theory.
Recall that a set $N\subseteq [\nn]$ is called Ramsey-null
if for every $s\in [\nn]^{<\nn}$ and every $L\in [\nn]$ with $s<L$,
there exists $L'\in [L]$ such that $[s,L']\cap N=\varnothing$.
As every analytic set is Ramsey \cite{S1}, it is natural to ask
the following. Is it true that for every analytic set $A\subseteq
[\nn]$ there exists $B\supseteq A$ Borel such that $B\setminus A$
is Ramsey-null? As we will show the answer is no and a counterexample
can be found which is in addition Ramsey-null.

To this end we will need some notations from \cite{AGR}. Let
$X$ be a separable Banach space. By $X^{**}_{\mathcal{B}_1}$
we denote the set of all Baire-1 elements of the ball of
the second dual $X^{**}$ of $X$. We will say that $X$ is
$\alpha$-universal if
\[ \sup\{ \alpha(x^{**}): x^{**}\in X^{**}_{\mathcal{B}_1} \}=\omega_1. \]
We should point out that there exist non-universal (in the
classical sense) separable Banach spaces which are $\alpha$-universal
(see \cite{AD}). We have the following.
\begin{prop}
\label{rn} There exists a $\SB^1_1$ Ramsey-null subset $A$
of $[\nn]$ for which there does not exist a Borel set $B\supseteq A$
such that the difference $B\setminus A$ is Ramsey-null.
\end{prop}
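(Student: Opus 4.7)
The plan is to exploit the existence of an $\alpha$-universal separable Banach space $X$ (supplied by \cite{AD}) to manufacture a sequence $\bs$ of Borel real-valued functions on a compact metrizable Polish space $K^{*}$, namely the closed unit ball of $X^{*}$ with the weak$^{*}$ topology, for which
\[
\text{(a) } \lbf^{1} \text{ is cofinal in } [\nn], \qquad \text{(b) } \sup\{\alpha(f_{L}):L\in\lbf^{1}\}=\omega_{1}.
\]
Concretely, I would build $\bs$ out of the canonical image of the unit ball of $X$ in $X^{**}_{\mathcal{B}_{1}}$ by interleaving approximations to witnesses $x^{**}_{\xi}\in X^{**}_{\mathcal{B}_{1}}$ of arbitrarily large rank (supplied by $\alpha$-universality together with sequential weak$^{*}$-density of the ball of $X$ in the ball of $X^{**}$ via Goldstine and the metrizability of $K^{*}$). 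Property (b) reflects $\alpha$-universality directly, while property (a) is the delicate engineering: every infinite $L$ must admit a further $L'\in[L]$ along which the subsequence is pointwise convergent to a Baire-$1$ function.

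Granted such $\bs$, set $A := [\nn]\setminus \lbf^{1}$. Since $\lbf^{1}$ is $\PB^{1}_{1}$, $A$ is $\SB^{1}_{1}$. For the Ramsey-nullness of $A$: given any basic Ellentuck neighbourhood $[s,L]$, cofinality (a) yields $L'\in[L]\cap \lbf^{1}$; since pointwise convergence is preserved under passage to infinite subsets and is insensitive to a finite prefix, $[s,L']\subseteq \lbf^{1}$, whence $[s,L']\cap A = \emptyset$.

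Now suppose, towards a contradiction, that there is a Borel $B\supseteq A$ with $B\setminus A$ Ramsey-null. Then $B\setminus A = B\cap \lbf^{1}$, and $[\nn]\setminus B\subseteq [\nn]\setminus A = \lbf^{1}$ is a Borel subset of $\lbf^{1}$. By Theorem \ref{t4} applied to $[\nn]\setminus B$, there is $\xi_{0}<\omega_{1}$ with $\alpha(f_{L})\leq \xi_{0}$ for every $L\in[\nn]\setminus B$. Using (b), pick $L_{0}\in\lbf^{1}$ with $\alpha(f_{L_{0}})>\xi_{0}$; then $L_{0}\notin [\nn]\setminus B$, so $L_{0}\in B\cap \lbf^{1} = B\setminus A$. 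Apply Ramsey-nullness of $B\setminus A$ to $[\emptyset,L_{0}]$ to find $L_{0}'\in[L_{0}]$ with $[L_{0}']\cap(B\setminus A)=\emptyset$. Every infinite subset of $L_{0}$ is also in $\lbf^{1}$ with the same pointwise limit, so $L_{0}'\in \lbf^{1}$ and $f_{L_{0}'}=f_{L_{0}}$, giving $\alpha(f_{L_{0}'})>\xi_{0}$. But $L_{0}'\in [L_{0}']\cap \lbf^{1}$: if $L_{0}'\in B$ then $L_{0}'\in [L_{0}']\cap(B\setminus A)$, a contradiction; hence $L_{0}'\in [\nn]\setminus B$, forcing $\alpha(f_{L_{0}'})\leq \xi_{0}$, contradicting $\alpha(f_{L_{0}'})>\xi_{0}$.

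The hard part of this plan is the simultaneous realisation of (a) and (b) for $\bs$: cofinality demands that the pointwise closure of $(f_{n})_{n}$ be rich enough to always supply Baire-$1$ limits, whereas $\alpha$-unboundedness prevents that closure from being a separable Rosenthal compact by Theorem \ref{t5}. The construction must therefore produce Baire-$1$ accumulation points of arbitrarily high separation rank sitting cofinally in $[\nn]$, without collapsing the closure to a Rosenthal compact. Once $\bs$ is in place, the remainder is a clean rank-chasing contradiction between Theorem \ref{t4} applied to $[\nn]\setminus B$ and the Ramsey-nullness of the hypothetical $B\setminus A$.
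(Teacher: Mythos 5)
The closing rank-chasing argument you give is sound and is essentially the paper's own final step, but the object you apply it to does not exist: the ``delicate engineering'' you defer is not merely hard, it is impossible in the form you request. Your $f_n$ are elements of the unit ball of $X$ viewed as continuous, uniformly bounded functions on the compact metrizable space $K^{*}=(B_{X^{*}},w^{*})$, and for such a sequence the limits along any $L\in\lbf$ are automatically Baire-1, so $\lbf^{1}=\lbf$. If $\lbf$ were cofinal (your condition (a)), then every subsequence of $(x_n)_n$ would have a weakly Cauchy subsequence; by Rosenthal's $\ell_1$-theorem and density of $(x_n)_n$ in $B_X$ this forces $X\not\supseteq\ell_1$, and equivalently, by the Bourgain--Fremlin--Talagrand theorem, the closure $\kk$ of $\{f_n\}_n$ in $\rr^{K^{*}}$ is then a separable Rosenthal compact, so Theorem \ref{t5} gives $\sup\{\alpha(f):f\in\kk\}<\omega_1$, contradicting your condition (b). You correctly sense this tension in your last paragraph, but there is no way to resolve it: (a) and (b) cannot coexist for a sequence of this kind, and the set $A=[\nn]\setminus\lbf^{1}$ you propose would never be Ramsey-null for an $\alpha$-universal $X$.

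The paper's way out is to give up cofinality of $\lbf$ itself. It keeps your space $X$ and the dense sequence $(x_n)_n$ in $B_X$, and supplements $\lbf$ with the $\SB^0_2$ set $\Lambda_1$ of all finite perturbations of sets $L$ along which $(x_n)_{n\in L}$ is equivalent to the unit vector basis of $\ell_1$. Rosenthal's dichotomy makes $\lbf\cup\Lambda_1$ cofinal, and both pieces are hereditary and invariant under finite changes, so $A=[\nn]\setminus(\lbf\cup\Lambda_1)$ is $\SB^1_1$ and Ramsey-null. Given a Borel $B\supseteq A$ with $B\setminus A$ Ramsey-null, one passes to $C=[\nn]\setminus(B\cup\Lambda_1)$, a Borel subset of $\lbf$ with $\lbf\setminus C$ Ramsey-null; every $x^{**}\in X^{**}_{\mathcal{B}_1}$ is still of the form $x_L$ for some $L\in C$ (by the Odell--Rosenthal representation plus Ramsey-nullness of $\lbf\setminus C$), and Theorem \ref{t4} applied to $C$ contradicts $\alpha$-universality --- exactly the contradiction you run at the end, but now applied to a set that actually exists. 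Note also that the unboundedness of the ranks over $\lbf$ is not something you need to build in by hand: it is automatic from $\alpha$-universality once every Baire-1 element of $B_{X^{**}}$ is realised as a weak* limit along some $L\in\lbf$.
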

\begin{proof}
Let $X$ be a separable $\alpha$-universal Banach space and
fix a norm dense sequence $\mathbf{f}=(x_n)_n$ in the ball of
$X$ (it will be convenient to assume that $x_n\neq x_m$ if
$n\neq m$). Let
\[ \lbf=\{ L\in[\nn]: (x_n)_{n\in L} \text{ is weak* convergent} \}. \]
Clearly $\lbf$ is $\PB^1_1$. Moreover, notice that $\lbf=\lbf^1$ according
to the notation of Theorem \ref{t4}.

Let $x^{**}\in X^{**}_{\mathcal{B}_1}$ arbitrary. By the Odell-Rosenthal
theorem (see \cite{AGR} or \cite{OR}), there exists $L\in\lbf$ such that
$x^{**}=w^*-\lim_{n\in L} x_n$. It follows that
\[ \sup\{ \alpha(x^{**}): x^{**}\in X^{**}_{\mathcal{B}_1} \} =
\sup\{ \alpha(x_L): L\in\lbf \} \]
where $x_L$ denotes the weak* limit of the sequence $(x_n)_{n\in L}$.
Denote by $(e_n)_n$ the standard basis of $\ell_1$ and let
\[ \Lambda=\{ L\in [\nn]: \exists k \text{ such that } (x_n)_{n\in L}
\text{ is } (k+1)-\text{equivalent to } (e_n)_n \} \]
where, as usual, if $L\in [\nn]$ with $L=\{ l_0<l_1<...\}$ its increasing
enumeration, then $(x_n)_{n\in L}$ is $(k+1)$-equivalent to $(e_n)_n$ if
for every $m\in \nn$ and every $a_0,...,a_m\in \rr$ we have
\[ \frac{1}{k+1} \sum_{n=0}^m |a_n| \leq \Big\| \sum_{n=0}^m a_nx_{l_n}\Big\|_X
\leq (k+1) \sum_{n=0}^m |a_n|. \]
Then $\Lambda$ is $\SB^0_2$. We notice that, by Bourgain's result
\cite{Bou} and our assumptions on the space $X$, the set $\Lambda$
is non-empty. Let also
\[ \Lambda_1=\{ N\in[\nn]: \exists L\in \Lambda \ \exists s\in[\nn]^{<\nn}
\text{ such that } N\bigtriangleup L=s\}. \]
Clearly $\Lambda_1$ is $\SB^0_2$ too. Observe that both $\lbf$ and $\Lambda_1$
are hereditary and invariant under finite changes. Moreover the set $\lbf
\cup \Lambda_1$ is cofinal. This is essentially a consequence of Rosenthal's
dichotomy (see, for instance, \cite{LT}). It follows that the set
$A=[\nn]\setminus (\lbf\cup \Lambda_1)$ is $\SB^1_1$ and Ramsey-null.

We claim that $A$ is the desired set. Assume not, i.e. there exists a
Borel set $B\supseteq A$ such that the difference $B\setminus A$ is Ramsey-null.
We set $C=[\nn]\setminus (B\cup\Lambda_1)$. Then $C\subseteq\lbf$ is Borel
and moreover $\lbf\setminus C$ is Ramsey-null. It follows that for every
$x^{**}\in X^{**}_{\mathcal{B}_1}$ there exists $L\in C$ such that
$x^{**}=x_L$. As $C$ is Borel, by Theorem \ref{t4} we have that
\[ \sup\{\alpha(x^{**}): x^{**}\in X^{**}_{\mathcal{B}_1} \} =
\sup\{ \alpha(x_L): L\in C \}<\omega_1 \]
which contradicts the fact that $X$ is $\alpha$-universal. The proof
is completed.
\end{proof}
\begin{rem}
(1) An example as in Proposition \ref{rn} can also be given using
the convergence rank $\gamma$ studied by A. S. Kechris and A. Louveau
\cite{KL}. As the reasoning is the same, we shall briefly describe
the argument. Let $(f_n)_n$ be a sequence of continuous function
on $2^\nn$ with $\|f_n\|_\infty\leq 1$ for all $n\in\nn$ and such
that the set $\{f_n:n\in\nn\}$ is norm dense in the ball
of $C(2^\nn)$. As in Proposition \ref{rn}, consider the sets
$\lbf$, $\Lambda_1$ and $A=[\nn]\setminus(\lbf\cup \Lambda_1)$.
Then the set $A$ is $\SB^1_1$ and Ramsey-null. That $A$ cannot
be covered by a Borel set $B$ such that the difference
$B\setminus A$ is Ramsey-null follows essentially by the following facts.
\begin{enumerate}
\item[(F1)] The map $(g_n)_n\mapsto \gamma\big((g_n)_n\big)$ is
a $\PB^1_1$-rank on the set $\mathrm{CN}=\{ (g_n)_n\in C(2^\nn)^\nn:
(g_n)_n \text{ is pointwise convergent}\}$ (see \cite{Kechris},
page 279). Hence the map
\[ \lbf\ni L=\{l_0<l_1<...\}\mapsto \gamma\big( (f_{l_n})_n\big) \]
is a $\PB^1_1$-rank on $\lbf$.
\item[(F2)] For every $\Delta\in \mathbf{\Delta}^0_2$, there
exists $L\in\lbf$ such that the sequence $(f_n)_{n\in L}$ is
pointwise convergent to $\chi_\Delta$. By Proposition
1 in \cite{KL}, we get that $\alpha(\chi_\Delta)\leq \gamma
\big( (f_n)_{n\in L}\big)$. It follows that
\[ \sup\{ \gamma\big((f_n)_{n\in L}\big): L\in\lbf\}\geq \sup\{
\alpha(\chi_{\Delta}): \Delta\in \mathbf{\Delta}^0_2\}=\omega_1. \]
\end{enumerate}
\noindent (2) For the important special case of a separable Rosenthal
compact $\kk$ defined on a compact metrizable space $X$ and having
a dense set of continuous functions, Theorem \ref{t5} has originally
been proved by J. Bourgain \cite{Bou}. We should point out that in
this case one does not need Corollary \ref{c} in order to carry
out the proof. Let us briefly explain how this can be done.
So assume that $X$ is compact metrizable and $\bs$ is a sequence
of continuous functions dense in $\kk$. Fix $a,b\in\qq$ with $a<b$
and let $A_n=[f_n\leq a]$ and $B_n=[f_n\geq b]$. For a given
$M\in [\nn]$ let as usual
\[ \liminf_{n\in M} A_n = \bigcup_n \bigcap_{k\geq n, k\in M} A_k \]
and similarly for $\liminf_{n\in M} B_n$. Observe the following.
\begin{enumerate}
\item[(O1)] For every $M\in[\nn]$ the sets $\liminf_{n\in M} A_n$
and $\liminf_{n\in M} B_n$ are both $\SB^0_2$.
\item[(O2)] If $L,M\in[\nn]$ are such that $L\subseteq M$, then
$\liminf_{n\in M} A_n\subseteq \liminf_{n\in L} A_n$ and similarly
for $B_n$.
\item[(O3)] If $L\in \lbf$, then $[f_L<a]\subseteq \liminf_{n\in L}
A_n \subseteq [f_L\leq a]$ and respectively $[f_L>b]\subseteq
\liminf_{n\in L} B_n \subseteq [f_L\geq b]$.
\end{enumerate}
Define $\mathbb{D}:[\nn]\times K(X)\to K(X)$ by
\[ \mathbb{D}(M,K)= \overline{ K\cap\liminf_{n\in M} A_n}
\cap \overline{ K\cap\liminf_{n\in M} B_n}.\]
By (O1) and using the same arguments as in the proof of Theorem
\ref{t4}, we can easily verify that $\mathbb{D}$ is Borel. As
$\lbf$ is cofinal, by (O2) and (O3) we can also easily verify that
$\Omega_{\mathbb{D}}=[\nn]\times K(X)$. So by boundedness we get
$\sup\{ |K|_{\mathbb{D}_M}: (M,K)\in[\nn]\times K(X)\}<\omega_1$.
Now using (O3) again, we finally get that $\sup\{ \alpha(f,a,b):
f\in \kk\}<\omega_1$, as desired.
\end{rem}


\section{On the descriptive set-theoretic properties of $\lbf$}

In this section we will show that certain topological properties of
a separable Rosenthal compact $\kk$ imply the Borelness of the set
$\lbf$. To this end, we recall that $\kk$ is said to be a pre-metric
compactum of degree at most two if there exists a countable subset
$D$ of $X$ such that at most two functions in $\kk$ agree on $D$
(see \cite{To}). Let us consider the following subclass.
\begin{defn}
We will say that $\kk$ is a pre-metric compactum of degree exactly two,
if there exist a countable subset $D$ of $X$ and a countable subset
$\mathcal{D}$ of $\kk$ such that at most two functions in $\kk$ coincide
on $D$ and moreover for every $f\in\kk\setminus\mathcal{D}$ there exists
$g\in\kk$ with $g\neq f$ and such that $g$ coincides with $f$ on $D$.
\end{defn}
An important example of such a compact is the split interval (but it is
not the only important one -- see Remark \ref{r2} below). Under the
above terminology we have the following.
\begin{thm}
\label{t3} Let $X$ be a Polish space and $\kk$ a separable Rosenthal
compact on $X$. If $\kk$ is pre-metric of degree exactly two, then
for every dense sequence $\bs$ in $\kk$ the set $\lbf$ is Borel.
\end{thm}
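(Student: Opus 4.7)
I plan to prove $\lbf$ is analytic; since $\lbf$ is automatically co-analytic, Luzin's separation theorem then yields Borelness. Let $D \subseteq X$ and $\mathcal{D} \subseteq \kk$ be the countable sets witnessing pre-metric degree exactly two, and let $\pi \colon \rr^X \to \rr^D$ denote the restriction map. The Borel auxiliary set
\[ \lbf^D = \{L \in [\nn] : (f_n(d))_{n \in L} \text{ converges for every } d \in D\} \]
contains $\lbf$ and carries a Borel function $L \mapsto \phi_L := \lim_{n \in L} f_n|_D \in \rr^D$.

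The structural heart of the argument is that, for $L \in \lbf^D$, every cluster point of $(f_n)_{n \in L}$ in $\rr^X$ is itself a subsequential pointwise limit (Bourgain--Fremlin--Talagrand applied inside $\kk$) and therefore lies in the fiber $\kk \cap \pi^{-1}(\phi_L)$, which has at most two elements. The set $\mathcal{L}$ of lonely members of $\kk$ (those with a singleton fiber) is contained in $\mathcal{D}$, hence countable, because the ``degree exactly two'' hypothesis forces every $f \in \kk \setminus \mathcal{D}$ to have a twin. If $\phi_L \in \pi(\mathcal{L})$ then convergence of $(f_n)_{n\in L}$ is automatic; otherwise the fiber is a twin pair $\{h_1, h_2\}$ and I would establish the dichotomy
\[ L \in \lbf \iff \exists\, x \in X \text{ with } h_1(x) \neq h_2(x) \text{ and } (f_n(x))_{n \in L} \text{ convergent in } \rr. \]
The $(\Leftarrow)$ direction is the key step: every real cluster value of $(f_n(x))_{n \in L}$ equals $h_1(x)$ or $h_2(x)$ by the cluster analysis above, so convergence at a disagreement point rules out one of $h_1, h_2$ as a global subsequential limit, and sequential compactness of $\kk$ upgrades this uniqueness of cluster point to pointwise convergence of $(f_n)_{n \in L}$.

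To turn this into an analytic description I would invoke Corollary \ref{c} to fix a Borel code $C \subseteq \lbf$ surjecting onto $\kk$, and encode the twin pair via pairs of codes, rewriting the criterion as
\[ L \in \lbf \iff L \in \lbf^D \ \wedge\ \big(\phi_L \in \pi(\mathcal{L}) \ \vee\ \exists\, (c_1,c_2,x) \in C \times C \times X \text{ with } f_{c_1}|_D = f_{c_2}|_D = \phi_L,\ f_{c_1}(x) \neq f_{c_2}(x),\ (f_n(x))_{n \in L} \text{ convergent}\big). \]
By Lemma \ref{cl1} all of the inner predicates are Borel in $(L, c_1, c_2, x)$, and the first disjunct is a countable union of Borel equalities, one per $h \in \mathcal{L}$. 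The right-hand side is therefore $\SB^1_1$, which completes the plan. The main effort I anticipate is the cluster-point analysis and the reduction of global non-convergence to non-convergence at a single disagreement point; after that, the $\SB^1_1$ bookkeeping is routine using Lemma \ref{cl1} and the Debs-based code.
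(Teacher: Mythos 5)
Your overall strategy is in essence the paper's own: for $L$ in the Borel set $\lbf^D$, use the degree-exactly-two hypothesis to identify the fiber of $\kk$ over $\lim_{n\in L}f_n|_D$ as at most a twin pair, witness convergence by convergence at a single point where the twins disagree, handle a countable exceptional family separately, and express everything as a $\SB^1_1$ condition via the Debs code $C$, concluding Borelness from $\SB^1_1\cap\PB^1_1$. Your cluster-point analysis and the equivalence ``$L\in\lbf$ iff $(f_n(x))_{n\in L}$ converges at some disagreement point of the twin pair'' are correct. Quantifying existentially over $(c_1,c_2,x)\in C\times C\times X$ directly is a mild simplification of the paper, which instead invokes the uniformization theorem for Borel sets with $K_\sigma$ sections (Theorem 35.46 of \cite{Kechris}) to select the disagreement point in a Borel way; for a mere $\SB^1_1$ upper bound the plain projection suffices.

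There is, however, one genuine gap: you assume the code $C$ ``surjects onto $\kk$'', but Corollary \ref{c} only guarantees that every \emph{accumulation point} of $(f_n)_n$ is of the form $f_c$. The limit $f_L$ itself is always an accumulation point and hence coded, but its twin $g$ need not be: if $g$ is an isolated point of $\kk$ occurring only finitely often in the sequence, then $g\neq f_c$ for every $c\in C$. For such an $L\in\lbf$ both disjuncts of your criterion fail (the fiber has two elements, so $\phi_L\notin\pi(\mathcal{L})$, and no pair $(c_1,c_2)$ codes the twin pair), so your formula wrongly declares $L\notin\lbf$. This is precisely what the Claim in the paper's proof addresses: one enlarges $\mathcal{D}$ to a countable set $\mathcal{D}'$ containing every $f\in\kk$ whose twin is isolated (countable, since $\kk$ has only countably many isolated points and is pre-metric of degree at most two). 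Note that for these exceptional $f$ the first disjunct cannot remain a condition on $\phi_L$ alone, since the fiber over $f|_D$ still has two elements; it must be replaced by membership in $\bigcup_{f\in\mathcal{D}'}\llf$, which is Borel because every point of $\kk$ is $G_\delta$. With that repair your argument closes and coincides with the paper's proof.
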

\begin{proof}
Let $\bs$ be a dense sequence in $\kk$ and $C$ be the set of codes
of $(f_n)_n$. Let also $D\subseteq X$ countable and $\mathcal{D}
\subseteq\kk$ countable verifying that $\kk$ is pre-metric of degree
exactly two.
\bigskip

\noindent \textsc{Claim.} \textit{There exists $\mathcal{D}'\subseteq \kk$
countable with $\mathcal{D}\subseteq\mathcal{D}'$ and such that for every
$c\in C$ with $f_c\in\kk\setminus \mathcal{D}'$ there exists $c'\in C$ such
that $f_{c'}\neq f_c$ and $f_{c'}$ coincides with $f_c$ on $D$.}
\bigskip

\noindent \textit{Proof of the claim.} Let $c\in C$ be such that
$f_c\in\kk\setminus\mathcal{D}$. Let $g$ be the (unique) function
in $\kk$ with $g\neq f_c$ and such that $g$ coincides with $f_c$ on $D$.
If there does not exist $c'\in C$ with $g=f_{c'}$, then $g$ is an isolated
point of $\kk$. We set
\[ \mathcal{D}'=\mathcal{D} \cup \{ f\in\kk: \exists g\in\kk \text{ isolated
such that } f(x)=g(x) \ \forall x\in D\}. \]
As the isolated points of $\kk$ are countable and $\kk$ is pre-metric
of degree at most two, we get that $\mathcal{D}'$ is countable. Clearly
$\mathcal{D}'$ is as desired.  \hfill $\lozenge$
\bigskip

\noindent Let $\mathcal{D}'$ be the set obtained above and put
\[ \ld=\bigcup_{f\in\mathcal{D}'} \llf= \bigcup_{f\in\mathcal{D}'}
\{ L\in [\nn]: (f_n)_{n\in L} \text{ is pointwise convergent to } f\}. \]
As every point in $\kk$ is $G_\delta$, we see that $\ld$ is Borel (actually
it is $\SB^0_4$). Consider the following equivalence relation $\sim$ on $C$,
defined by
\[ c_1\sim c_2 \Leftrightarrow \forall x\in D \ f_{c_1}(x)=f_{c_2}(x). \]
By Lemma \ref{cl1}, the equivalence relation $\sim$ is Borel. Consider
now the relation $P$ on
$C\times C\times K(X)\times X$ defined by
\[ (c_1,c_2,K,x)\in P \Leftrightarrow (c_1\sim c_2) \ \wedge \ (x\in K) \
\wedge \ (|f_{c_1}(x)-f_{c_2}(x)|>0). \]
Again $P$ is Borel. Moreover notice that for every $c_1,c_2\in C$ the function
$x\mapsto |f_{c_1}(x)-f_{c_2}(x)|$ is Baire-1, and so, for every $(c_1,c_2,K)
\in C\times C\times K(X)$ the section $P_{(c_1,c_2,K)}=\{ x\in X: (c_1,c_2,K,x)
\in P\}$ of $P$ is $K_\sg$. By Theorem 35.46 in \cite{Kechris}, the set
$S\subseteq C\times C\times K(X)$ defined by
\[ (c_1,c_2,K)\in S \Leftrightarrow \exists x \ (c_1,c_2,K,x)\in P \]
is Borel and there exists a Borel map $\phi:S\to X$ such that for every
$(c_1,c_2,K)\in S$ we have $\big(c_1,c_2,K,\phi(c_1,c_2,K)\big)\in P$.
By the above claim, we have that for every $c\in C\setminus\ld$ there
exist $c'\in C$ and $K\in K(X)$ such that $(c,c',K)\in S$. Moreover,
observe that the set $D\cup \big\{\phi(c,c',K)\big\}$ determines the
neighborhood basis of $f_c$. The crucial fact is that this can be done
in a Borel way.

Now we claim that
\begin{eqnarray*}
L\in \lbf & \Leftrightarrow & (L\in\ld) \vee \Big[ (\forall x\in D \
\big( f_n(x) \big)_{n\in L} \text{ converges}) \ \wedge \\
& & \big\{ \exists s\in S \text{ with } s=(c_1,c_2,K) \text{ such that } \\
& & \ [\forall x\in D \ f_{c_1}(x)=\lim_{n\in L} f_n(x)] \ \wedge \\
& & \ [\big( f_n(\phi(s))\big)_{n\in L} \text{ converges}] \ \wedge \\
& & \ [f_{c_1}(\phi(s))= \lim_{n\in L} f_n(\phi(s))] \big\} \Big].
\end{eqnarray*}
Grating this, the proof is completed as the above expression gives a
$\SB^1_1$ definition of $\lbf$. As $\lbf$ is also $\PB^1_1$, this
implies that $\lbf$ is Borel, as desired.

It remains to prove the above equivalence. First assume that $L\in\lbf$.
We need to show that $L$ satisfies the expression on the right. If
$L\in\ld$ this is clearly true. If $L\notin\ld$, then pick a code
$c\in C\setminus\ld$ such that $f_c=f_L$. By the above claim and
the remarks of the previous paragraph we can easily verify that
again $L$ satisfies the expression on the right. Conversely, let
$L$ fulfil the right side of the equivalence. If $L\in \ld$ we are
done. If not, then by the Bourgain-Fremlin-Talagrand theorem,
it suffices to show that all convergent subsequences of
$(f_n)_{n\in L}$ have the same limit. The first two conjuncts
enclosed in the square brackets on the right side of the
equivalence guarantee that each such convergent subsequence
of $(f_n)_{n\in L}$ converges either to $f_{c_1}$ or to $f_{c_2}$.
The last two conjuncts guarantee that it is not $f_{c_2}$,
so it is always $f_{c_1}$. Thus $L\in \lbf$ and the proof is completed.
\end{proof}
\begin{rem}
\label{r2} (1) Let $\kk$ be a pre-metric compactum of degree at
most two and let $D\subseteq X$ countable such that at most two
functions in $\kk$ coincide on $D$. Notice that the set $C$ of
codes of $\kk$ is naturally divided into two parts, namely
\[ C_2=\{ c\in C: \exists c'\in C \text{ with } f_c\neq f_{c'}
\text{ and } f_c(x)=f_{c'}(x) \ \forall x\in D\} \]
and its complement $C_1=C\setminus C_2$. The assumption that
$\kk$ is pre-metric of degree exactly two, simply reduces to
the assumption that the functions coded by $C_1$ are at most
countable. We could say that $C_1$ is the set of metrizable codes,
as it is immediate that the set $\{ f_c:c\in C_1\}$ is a metrizable
subspace of $\kk$. It is easy to check, using the set $S$ defined
in the proof of Theorem \ref{t3}, that $C_2$ is always $\SB^1_1$.
As we shall see, it might happen that $C_1$ is $\PB^1_1$-true.
However, if the set $C_1$ of metrizable codes is Borel, or equivalently
if $C_2$ is Borel, then the set $\lbf$ is Borel too. Indeed, let $\Phi$
be the second part of the disjunction of the expression in the proof
of Theorem \ref{t3}. Then it is easy to see, using the same arguments
as in the proof of Theorem \ref{t3}, that
\[ L\in\lbf \Leftrightarrow (L\in\Phi) \ \vee \ (\exists c\in C_1 \
\forall x\in D \ f_c(x)=\lim_{n\in L} f_n(x) ).\]
Clearly the above formula gives a $\SB^1_1$ definition of $\lbf$,
provided that $C_1$ is Borel.\\
(2) Besides the split interval, there exists another important example
of a separable Rosenthal compact which is pre-metric of degree exactly
two. This is the separable companion of the Alexandroff duplicate of
the Cantor set $D(2^\nn)$ (see \cite{To} for more details). An interesting
feature of this compact is that it is \textit{not} hereditarily separable.
\end{rem}
\begin{examp}
\label{e1} We proceed to give examples of pre-metric compacta of degree
at most two for which Theorem \ref{t3} is not valid. Let us recall first
the split Cantor set $S(2^\nn)$, which is simply the combinatorial
analogue of the split interval. In the sequel by $\leqslant$ we
shall denote the lexicographical ordering on $2^\nn$ and by $<$
its strict part. For every $x\in 2^\nn$ let $f^+_x=
\chi_{\{y:x\leqslant y\}}$ and $f^-_x=\chi_{\{y:x<y\}}$. The split
Cantor set $S(2^\nn)$ is $\{ f^+_x:x\in 2^\nn\}\cup \{ f^-_x: x\in 2^\nn\}$.
Clearly $S(2^\nn)$ is a hereditarily separable Rosenthal compact and
it is a fundamental example of a pre-metric compactum of degree at most
two (see \cite{To}). There is a canonical dense sequence in $S(2^\nn)$
defined as follows. Fix a bijection $h:\ct\to\nn$ such that $h(s)<h(t)$
if  $|s|<|t|$ and enumerate the nodes of Cantor tree as $(s_n)_n$ according
to $h$. For every $s\in\ct$ let $x^0_s=s^\con 0^{\infty}\in 2^\nn$ and
$x^1_s=s^\con 1^{\infty}\in 2^\nn$. For every $n\in\nn$ let $f_{4n}=
f^+_{x^0_{s_n}}$, $f_{4n+1}=f^+_{x^1_{s_n}}$, $f_{4n+2}=f^-_{x^0_{s_n}}$
and $f_{4n+3}=f^-_{x^1_{s_n}}$. The sequence $(f_n)_{n\in \nn}$ is a
dense sequence in $S(2^\nn)$.

Let $A$ be a subset of $2^\nn$ such that $A$ does not contain the eventually
constant sequences. To every such $A$ one associates naturally a subset
of $\rr^A$, which we will denote by $S(A)$, by simply restricting every
function of $S(2^\nn)$ on $A$. Clearly if $A$ is $\SB^1_1$, then $S(A)$
is again a hereditarily separable Rosenthal compact. Notice however
that if $2^\nn\setminus A$ is uncountable, then $S(A)$ is not of degree
exactly two. The dense sequence $(f_n)_n$ of $S(2^\nn)$ still remains
a dense sequence in $S(A)$. Viewing $(f_n)_n$ as a dense sequence in $S(A)$,
we let
\[ \mathcal{L}_A=\{ L\in[\nn]: (f_n|_A)_{n\in L} \text{ is pointwise
convergent on } A \}. \]
Under the above notations we have the following.
\begin{prop}
\label{p1} Let $A\subseteq 2^\nn$ be $\SB^1_1$ and such that $A$ does not
contain the eventually constant sequences. Then $2^\nn\setminus A$ is Wadge
reducible to $\mathcal{L}_A$. In particular, if $A$ is $\SB^1_1$-complete,
then $\mathcal{L}_A$ is $\PB^1_1$-complete. Moreover, if $A$ is Borel,
then $\mathcal{L}_A$ is Borel too.
\end{prop}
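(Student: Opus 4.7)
The plan is to construct a continuous map $\Phi : 2^\nn \to [\nn]$ that reduces $2^\nn \setminus A$ to $\mathcal{L}_A$, and then to derive both assertions from $\Phi$ together with Remark \ref{r2}(1). For $x \in 2^\nn$ and $k \in \nn$, set $s = x|k$; then the two auxiliary sequences $x^0_s = s \con 0^\infty$ and $x^1_s = s \con 1^\infty$ satisfy $x^0_s \leqslant x \leqslant x^1_s$ in the lex order, converge to $x$ from below and from above respectively as $k \to \infty$, and the hypothesis that $A$ contains no eventually constant sequence keeps both inequalities strict for every $x \in A$. Using the identifications $f_{4n} = f^+_{x^0_{s_n}}$ and $f_{4n+3} = f^-_{x^1_{s_n}}$ from the canonical dense sequence, I define
\[ \Phi(x) = \{4h(x|k) : k \in \nn\} \cup \{4h(x|k) + 3 : k \in \nn\}, \]
which is continuous because each of its initial segments depends only on an initial segment of $x$.

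The key computation is that $f^+_{x^0_{x|k}} \to f^+_x$ and $f^-_{x^1_{x|k}} \to f^-_x$ pointwise on $2^\nn$ as $k\to\infty$; since $f^+_x$ and $f^-_x$ coincide everywhere except at $y = x$ (where they take values $1$ and $0$), the merged sequence $(f_n|_A)_{n\in\Phi(x)}$ converges pointwise on $A$ if and only if $x \notin A$. This proves that $\Phi$ is the desired Wadge reduction; since $\mathcal{L}_A$ is always $\PB^1_1$, the $\PB^1_1$-completeness of $\mathcal{L}_A$ follows immediately whenever $A$ is $\SB^1_1$-complete.

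For the Borel case I propose to invoke Remark \ref{r2}(1) for $\kk = S(A)$ viewed as a separable Rosenthal compact on the Polish space $A$. The witnessing set $D \subseteq A$ is taken to be countable, dense in $A$, and to contain every point of $A$ that is an endpoint of a maximal lex-interval of $2^\nn \setminus A$ (these endpoints form a countable set because disjoint open lex-intervals of $2^\nn$ each contain an eventually constant sequence, and there are only countably many of the latter). A short case analysis then shows that two distinct elements of $S(A)$ can agree on $D$ only in the shape $\{f^+_x|_A, f^-_x|_A\}$ with $x \in A \setminus D$, so that $S(A)$ is pre-metric of degree at most two and $c \in C_2$ iff $f_c|_A = f^\epsilon_x|_A$ for some $x \in A \setminus D$ and $\epsilon \in \{+,-\}$. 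The main obstacle is to realize this as a Borel condition; my plan is to attach to each $c$ two Borel quantities $\alpha(c), \beta(c) \in 2^\nn$ built coordinate-by-coordinate as the lex-inf and lex-sup of the accumulation set of the jump-point sequence $(x_n)_{n \in c}$ (each coordinate being a $\PB^0_2$ condition on $c$), and then to verify that $c \in C_2$ precisely when $\alpha(c) \in A \setminus D$ with $f_c(\alpha(c)) = 1$, or $\beta(c) \in A \setminus D$ with $f_c(\beta(c)) = 0$. Combined with Lemma \ref{cl1}, this gives an explicitly Borel characterization of $C_2$, and Remark \ref{r2}(1) then delivers the Borelness of $\mathcal{L}_A$.
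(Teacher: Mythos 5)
Your reduction $\Phi$ is correct and is essentially the paper's argument in a mildly different dress. The paper uses only the functions $f^+_{x^0_t}$, but indexes them by the nodes $t$ that branch \emph{off} the branch $x$ at each level (so that the points $x^0_t$ straddle $x$ in the lexicographical order and $f^+_{x^0_t}(x)$ oscillates); you instead take both $f^+_{x^0_{x|k}}$ and $f^-_{x^1_{x|k}}$ along the branch itself, and the oscillation at $x$ comes from $f^+_{z}(x)=1$ for $z\leqslant x$ versus $f^-_{w}(x)=0$ for $w\geqslant x$. Both devices produce a sequence converging at every $y\neq x$ and diverging exactly at $x$, so the verification is the same, and the passage to $\PB^1_1$-completeness is as in the paper.

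For the last assertion the paper offers no proof (it is dismissed as a ``straightforward computation''), so you are on your own there, and your plan has a genuine flaw. The reduction to Remark \ref{r2}(1) is legitimate, and your choice of $D$ (an order-dense countable subset of $A$ together with the countably many endpoints of gaps) does witness that $S(A)$ is pre-metric of degree at most two, with $C_2$ consisting of the codes of $f^{\pm}_x|_A$ for $x\in A\setminus D$. The problem is the proposed Borel test for $C_2$: the lex-inf and lex-sup $\alpha(c),\beta(c)$ of the accumulation set of the jump points $(x_n)_{n\in c}$ do \emph{not} locate the point where $f_c$ and its partner disagree when $A$ is sparse. For instance, if $A$ meets a basic clopen interval $[s]$ only in a single point $x_0\notin D$ and the jump points of $c$ accumulate exactly at some $z\in[s]$ with $z<x_0$, then $\alpha(c)=\beta(c)=z\notin A$, so your test rejects $c$, yet $f_c=f^+_{x_0}|_A$ and $c\in C_2$. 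The relevant point is not an accumulation point of the $x_n$'s but the boundary of the cut $U_c=\{y\in A: f_c(y)=1\}$ \emph{inside} $A$, i.e.\ $\sup_{\leqslant}(A\setminus U_c)$ and $\inf_{\leqslant}U_c$; and establishing that these are Borel functions of $c$ is itself not immediate (a naive coordinate-by-coordinate computation of $\inf_{\leqslant}U_c$ quantifies existentially over $y\in A$ and only gives $\SB^1_1$). The standard repair is to compute these cut points through the countable set $D$ (e.g.\ via $\inf_{\leqslant}(U_c\cap D)$, which is Borel in $c$ by Lemma \ref{cl1}, noting that $U_c$ and $U_c\cap D$ determine the same cut up to at most one point of $A\setminus D$ because $D$ is order-dense in $A$). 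Until this is done, the Borel case remains unproved in your write-up.
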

\begin{proof}
Consider the map $\Phi:2^\nn\to 2^{\ct}$, defined by
\[ \Phi(x)=\big\{ x(0)+1, x(0)^\con (x(1)+1), x(0)^\con x(1)^\con (x(2)+1),
... \big\} \]
where the above addition is taken modulo 2. Clearly $\Phi$ is continuous.
Let $h:\ct\to \nn$ be the fixed enumeration of the nodes of the Cantor tree.
For every $x\in 2^\nn$ we put $L_x=\{ h(t): t\in \Phi(x)\}\in[\nn]$ and
also for every $t\in \Phi(x)$ let  $x^0_t=t^\con 0^{\infty}\in 2^\nn$.
Notice that $(t_n)_{n\in L_x}$ is the enumeration of $\Phi(x)$ according
to $h$ and moreover that $x$ is the limit of the sequence $(x^0_{t_n})_{n\in L_x}$.
However, as one easily observes, if $x$ is not eventually constant, then
there exist infinitely many $n\in L_x$ such that $x^0_{t_n}< x$ and
infinitely many $n\in L_x$ such that $x< x^0_{t_n}$.

Now define $H: 2^\nn\to [\nn]$ by
\[ H(x)=\{ 4h(t): t\in \Phi(x) \}=\{ 4n :n\in L_x\}. \]
Clearly $H$ is continuous. We claim that
\[ x\notin A \Leftrightarrow H(x)\in \mathcal{L}_A . \]
Indeed, first assume that $x\notin A$. As we have remarked before,
we have that $x=\lim_{n\in L_x} x^0_{t_n}$.
Notice that the sequence $(f_n)_{n\in H(x)}$ is simply the sequence
$\big( f^+_{x^0_t} \big)_{t\in \Phi(x)}$. Observe that for
every $y\neq x$ the sequence $\big( f_n(y)\big)_{n\in H(x)}$
converges to $0$ if $y<x$ and to $1$ if $x<y$. As $x\notin A$, this
implies that $(f_n)_{n\in H(x)}$ is pointwise convergent, and so
$H(x)\in\mathcal{L}_A$. Conversely, assume that $x\in A$. As $A$
does not contain the eventually constant sequences, by the remarks
after the definition of $\Phi$, we get that there exist infinitely many
$n\in H(x)$ such that $f_n(x)=0$ and infinitely many $n\in H(x)$
such that $f_n(x)=1$. Hence the sequence $\big(f_n(x)\big)_{n\in H(x)}$
does not converge, and as $x\in A$, we conclude that $H(x)\notin
\mathcal{L}_A$. As $H$ is continuous, this completes the proof
the proof that $2^\nn\setminus A$ is Wadge reducible to $\mathcal{L}_A$.
Finally, the fact that if $A$ is Borel, then $\mathcal{L}_A$ is Borel too
follows by straightforward descriptive set-theoretic computation
and we prefer not to bother the reader with it.
\end{proof}
\end{examp}
\begin{rem}
Besides the fact that Theorem \ref{t3} cannot be lifted to all pre-metric
compacta of degree at most two, Proposition \ref{p1} has another consequence.
Namely that we cannot bound the Borel complexity of $\lbf$ for a dense
sequence $\bs$ in $\kk$. This is in contrast with the situation with $\llf$
for some $f\in\kk$, which when it is Borel (equivalently when $f$ is a
$G_\delta$ point), it is always $\PB^0_3$.
\end{rem}
Concerning the class of not first countable separable Rosenthal
compacta we have the following.
\begin{prop}
\label{newp1} Let $\kk$ be a separable Rosenthal compact.
If there exists a non-$G_\delta$ point $f$ in $\kk$, then
for every dense sequence $\bs$ in $\kk$ the set $\lbf$
is $\PB^1_1$-complete.
\end{prop}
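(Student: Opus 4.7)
The plan is to reduce the statement to a (slightly strengthened) form of Krawczyk's theorem \cite{Kra}: for every non-$G_\delta$ point $f\in\kk$ and every dense sequence $\bs$ in $\kk$, the set $\llf$ is not merely non-Borel but in fact $\PB^1_1$-complete. This sharper conclusion is already contained in Krawczyk's argument, which produces an explicit Borel reduction of a $\PB^1_1$-complete set (for instance $\wf$) into $\llf$.

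Granting this, it suffices to construct a continuous reduction from $\llf$ to $\lbf$. Fix a non-$G_\delta$ point $f\in\kk$ and, using the Bourgain--Fremlin--Talagrand theorem \cite{BFT} together with the density of $\bs$ in $\kk$, fix once and for all some $M\in[\nn]$ for which $(f_n)_{n\in M}$ converges pointwise to $f$ (so $M\in\llf$). Define $\phi:[\nn]\to[\nn]$ by $\phi(L)=L\cup M$. The map $\phi$ is continuous, as union is continuous on $2^\nn$, and $\phi(L)\in[\nn]$ since $M$ is infinite.

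It remains to check that $L\in\llf$ if and only if $\phi(L)\in\lbf$. If $L\in\llf$, then both $(f_n)_{n\in L}$ and $(f_n)_{n\in M}$ converge pointwise to $f$, hence so does $(f_n)_{n\in L\cup M}$, giving $\phi(L)\in\llf\subseteq\lbf$. Conversely, assume $\phi(L)\in\lbf$, so $(f_n)_{n\in\phi(L)}$ converges pointwise to some $h\in\kk$; restricting along $M\subseteq\phi(L)$ forces $h=f$, and restricting along $L\subseteq\phi(L)$ then yields $(f_n)_{n\in L}\to f$, i.e. $L\in\llf$. The main obstacle is extracting $\PB^1_1$-completeness of $\llf$ from the construction of \cite{Kra}, rather than the weaker non-Borelness statement quoted in the introduction; a close reading of Krawczyk's argument, which produces an outright Borel map from $\wf$ into $\llf$, supplies exactly this.
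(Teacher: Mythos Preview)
Your argument is correct. The reduction $\phi(L)=L\cup M$ is a clean and reusable observation: for any fixed $M\in\llf$ it gives a continuous reduction of $\llf$ into $\lbf$, and such an $M$ exists since a non-$G_\delta$ point is never isolated (so $\llf\neq\varnothing$; alternatively, $\llf$ non-Borel already forces $\llf\neq\varnothing$).

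The paper takes a closely related but more self-contained route. Rather than quoting $\PB^1_1$-completeness of $\llf$ as a black box, it invokes Krawczyk's structural lemma (Proposition~\ref{newp2}) directly: the injection $\psi:\bt\to\nn$ is used to set $\Psi(T)=\psi[T\cup T_0]$ for a fixed infinite well-founded tree $T_0$, and a K\"{o}nig-lemma argument (the Claim) shows that $\Psi$ is a continuous reduction of $\wf$ to $\lbf$---in fact simultaneously to $\llf$, since $T\in\wf$ gives $\Psi(T)\in\llf$ while $T\notin\wf$ gives $\Psi(T)\notin\lbf$. The fixed tree $T_0$ plays exactly the role your fixed $M$ does: its image $\psi[T_0]\in\llf$ is the subsequence guaranteed to converge to $f$, so that when $T\notin\wf$ one finds inside $\Psi(T)$ both a subsequence converging to $f$ and one converging elsewhere. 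Your factorization $\wf\to\llf\to\lbf$ is thus a repackaging of the paper's single map; the gain is that your second arrow is elementary and isolates a general principle, while the cost is that the first arrow still requires unpacking Krawczyk's construction rather than merely citing his non-Borelness theorem as stated.
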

The proof of Proposition \ref{newp1} is essentially based on
a result of A. Krawczyk from \cite{Kra}. To state it, we need to
recall some pieces of notation and few definitions. For every
$a,b\in [\nn]$ we write  $a\subseteq^* b$ if $a\setminus b$
is finite, while we write $a\perp b$ if the set $a\cap b$
is finite. If $\mathcal{A}$ is a subset of $[\nn]$, we let
$\mathcal{A}^\perp=\{ b: b\perp a \ \forall a\in \mathcal{A}\}$
and $\mathcal{A}^*= \{ \nn\setminus a: a\in\mathcal{A}\}$.
For every $\mathcal{A}, \mathcal{B} \subseteq [\nn]$ we say
that $\mathcal{A}$ is countably $\mathcal{B}$-generated if there
exists $\{b_n:n\in\nn\}\subseteq \mathcal{B}$ such that for
every $a\in\mathcal{A}$ there exists $k\in\nn$ with
$a\subseteq b_0\cup ... \cup b_k$. An ideal $\ii$ on $\nn$ is
said to be bi-sequential if for every $p\in \beta\nn$ with
$\ii\subseteq p^*$, $\ii$ is countably $p^*$-generated.
Finally, for every $t\in\bt$ let $\hat{t}=\{ s: t\sqsubset s\}$.
We will use the following result of Krawczyk (see \cite{Kra},
Lemma 2).
\begin{prop}
\label{newp2} Let $\ii$ be $\SB^1_1$, bi-sequential and not
countably $\ii$-generated ideal on $\nn$. Then there exists
a 1-1 map $\psi:\bt\to\nn$ such that, setting $\jj=\{ \psi^{-1}(a):
a\in\ii\}$, the following hold.
\begin{enumerate}
\item[(P1)] For every $\sg\in\nn^\nn$, $\{ \sg|n: n\in\nn\}\in \jj$.
\item[(P2)] For every $b\in\jj$ and every $n\in\nn$, there exist
$t_0,...,t_k\in \nn^n$ with $b\subseteq ^* \hat{t}_0 \cup ...
\cup \hat{t}_k$.
\end{enumerate}
\end{prop}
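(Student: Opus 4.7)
My plan is to follow the strategy sketched by Krawczyk in \cite{Kra}. The map $\psi:\bt\to\nn$ will be constructed by recursion on the levels of the tree, simultaneously with an auxiliary family $(A_t)_{t\in\bt}$ of sets in $\ii$ that will guide the construction and make (P1) manifest. The set $A_t$ is intended to contain the $\psi$-image of the whole cone $\hat{t}$, so that branches through $t$ automatically have their image inside an $\ii$-set.

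The recursion maintains three invariants at each level $n$: (i) $\psi$ is injective on $\nn^{\leq n}$; (ii) $\psi(\hat{t}\cap \nn^{\leq n})\subseteq A_t$ whenever $|t|\leq n$; (iii) for every node $t$ of length $\leq n-1$, the children sets $A_{t^\con i}\subseteq A_t$ ($i\in\nn$) are chosen to be ``independent'' in a sense that prevents $\ii$ from being locally countably generated. The hypothesis that $\ii$ is not countably $\ii$-generated is exactly what permits such a splitting at each stage: if at some node $t$ all possible splittings of $A_t$ inside $\ii$ could be countably covered, one could assemble a countable generating family for the whole of $\ii$ by following the tree, contradicting the assumption. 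The $\SB^1_1$ character of $\ii$ is used to select the $A_t$'s in a coherent (uniformly definable) way along the recursion.

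Once $\psi$ is built, property (P1) is immediate: for any $\sg\in\nn^\nn$, invariant (ii) yields $\{\psi(\sg|n):n\in\nn\}\subseteq A_\varnothing\in\ii$, so $\{\sg|n:n\in\nn\}\in\jj$. Property (P2) is the delicate half and is where bi-sequentiality enters. I would argue by contradiction: suppose there exist $b\in\jj$ and a level $n$ such that $b$ is not almost contained in any finite union $\hat{t}_0\cup\dots\cup\hat{t}_k$ with $t_i\in\nn^n$. Then, enumerating $\nn^n=\{s_j:j\in\nn\}$, the sets $\psi(b\cap\hat{s_j})$ are non-trivial for infinitely many $j$ and are spread across distinct ``lanes'' of the family $(A_{s_j})_j$. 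Pick a non-principal ultrafilter $p$ on $\nn$ witnessing this spread, chosen so that $\ii\subseteq p^*$ (which is possible because the $A_{s_j}$'s live in $\ii$ and their complements form a filter base disjoint from $\ii$). Bi-sequentiality then provides $\{b_m\}_m\subseteq p^*$ countably generating $\ii$ over $p^*$; but $\psi(b)\in\ii$ cannot be covered by any finite $b_{m_0}\cup\dots\cup b_{m_l}$, since $p$ witnesses that $\psi(b)$ leaks out of each of them through infinitely many lanes. This contradicts countable $p^*$-generation and proves (P2).

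The main obstacle is the recursive construction of the family $(A_t)$ carrying invariant (iii) in a way that is strong enough to make the final contradiction argument for (P2) go through. Concretely, at each node $t$ one has to split $A_t$ into a sequence $(A_{t^\con i})_i$ of sets in $\ii$ so that no ultrafilter extending the dual filter of $\ii$ can ``hide'' a failure of (P2); this requires the splitting to be diagonalized against all potential ultrafilter extractions at later stages. Exploiting the $\SB^1_1$ character of $\ii$ and a fusion-type bookkeeping (enumerating the potential obstructions and defeating each at a prescribed stage of the recursion) is what makes the simultaneous construction of $\psi$ and $(A_t)$ work, and it is the technical heart of the argument.
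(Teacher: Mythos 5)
First, a point of comparison: the paper does not prove this proposition at all --- it is quoted directly from Krawczyk (\cite{Kra}, Lemma 2) --- so your attempt has to stand on its own merits. Unfortunately it contains a fatal internal inconsistency. Your invariant (ii), namely $\psi(\hat{t}\cap\nn^{\leq n})\subseteq A_t$ for every $t$ with $|t|\leq n$ and $A_t\in\ii$, applied at the root gives $\psi(\bt\setminus\{(\varnothing)\})=\psi(\hat{\varnothing})\subseteq A_{\varnothing}\in\ii$. Since $\ii$ is downward closed and $\psi$ is injective, this means $\psi(b)\in\ii$, hence $b\in\jj$, for \emph{every} $b\subseteq\hat{\varnothing}$; that is, the very step you use to get (P1) ``for free'' forces $\jj$ to contain essentially all of $\mathcal{P}(\bt)$. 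But then (P2) is simply false: the level $b=\nn^n$ ($n\geq 1$) is infinite and disjoint from every cone $\hat{t}$ with $t\in\nn^n$ (members of $\hat{t}$ have length greater than $n$), so $b$ is not almost contained in any finite union $\hat{t}_0\cup\dots\cup\hat{t}_k$. So the construction, as specified, cannot produce a map with property (P2).

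The correct shape of the argument requires the opposite asymmetry: only the image of each \emph{branch} should be trapped in a single member of $\ii$ (for instance by demanding $\psi(s)\in A_s$ with $A_s\subseteq A_{s'}$ whenever $s'\sqsubset s$, so that $\{\psi(\sg|n):n\geq 1\}\subseteq A_{\sg|1}\in\ii$), while the sets $A_{t^{\con}i}$ attached to the infinitely many siblings of a node must be chosen so ``independent'' over $\ii$ that any $b$ meeting infinitely many of the corresponding cones nontrivially has $\psi(b)\notin\ii$. That independence is exactly what the hypotheses (not countably $\ii$-generated, bi-sequential, $\SB^1_1$) have to deliver, and it is precisely the part your sketch does not supply: the splitting condition (iii) is never defined, the choice of an ultrafilter ``witnessing the spread'' with $\ii\subseteq p^*$ is not justified, and the final contradiction with countable $p^*$-generation appeals to unstated combinatorial properties of the $A_t$'s. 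As it stands, (P1) is derived from an invariant that destroys (P2), and (P2) is not actually proved.
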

We continue with the proof of Proposition \ref{newp1}.
\begin{proof}[Proof of Proposition \ref{newp1}]
Let $\bs$ be a dense sequence in $\kk$ and let $f\in\kk$
be a non-$G_\delta$ point. Consider the ideal
\[ \ii=\{ L\in [\nn]: f\notin \overline{\{f_n\}}^p_{n\in L}\}. \]
In \cite{Kra}, page 1099, it is shown that $\ii$ is a $\SB^1_1$,
bi-sequential ideal on $\nn$ which is not countably
$\ii$-generated (the bi-sequentiality of $\ii$ can be
derived either by a result of Pol \cite{Pol3}, or by the
non-effective version of Debs' theorem \cite{AGR}).
Also, by the Bourgain-Fremlin-Talagrand theorem, we have
that $\ii^\perp=\llf$. We apply Proposition \ref{newp2} and we
get a 1-1 map $\psi:\bt\to\nn$ satisfying (P1) and (P2).
\bigskip

\noindent \textsc{Claim.} \textit{For every $T\in\wf$ infinite,
$T\in \jj^\perp$.}
\bigskip

\noindent \textit{Proof of the claim.} Assume not. Then there exist
$T\in\wf$ infinite and $b\in\jj$ with $b\subseteq T$. For every
$s\in T$ let $T_s=\{ t\in T: s\sqsubseteq t\}$. We let $S=\{ s\in T:
T_s\cap b \text{ is infinite}\}$. Then $S$ is downwards closed
subtree of $T$. Moreover, by (P2) in Proposition \ref{newp2}, we see that
$S$ is finitely splitting. Finally, notice that for every $s\in S$
there exists $n\in\nn$ with $s^{\con}n\in S$. Indeed, let
$s\in S$ and put $b_s=T_s\cap b\in\jj$. Let $N_s=\{ n\in\nn:
s^{\con} n\in T\}$ and observe that $b_s\setminus \{s\}=
\bigcup_{n\in N_s} (T_{s^{\con}n}\cap b_s)$. By (P2) in
Proposition \ref{newp2} again, we get that there exists $n_0\in N_s$
with $T_{s^{\con}n_0}\cap b_s$ infinite. Thus $s^{\con}n_0\in S$. It
follows that $S$ is a finitely splitting, infinite tree. By K\"{o}nig's
Lemma, we see that $S\notin\wf$. But then $T\notin \wf$, a contradiction.
The claim is proved. \hfill $\lozenge$
\bigskip

\noindent Fix $T_0\in\wf$ infinite. The map $\Psi:\tr\to [\nn]$ defined
by  $\Psi(T)=\{ \psi(t): t\in T\cup T_0\}$ is clearly continuous.
If $T\in\wf$, then $T\cup T_0\in\wf$. By the above claim, we see that
$T\cup T_0\in \jj^\perp$, and so, $\Psi(T)\in \ii^\perp=\llf\subseteq \lbf$.
On the other hand, if $T\notin \wf$, then by (P1) in Proposition
\ref{newp2} and the above claim, we get that there exist $L\in\lbf\setminus
\llf$ and $M\in\llf$ with $L\cup M\subseteq \Psi(T)$. Hence
$\Psi(T)\notin \lbf$. It follows that $\wf$ is Wadge reducible to $\lbf$
and the proof is completed.
\end{proof}
The last part of this section is devoted to the construction of
canonical $\PB^1_1$-ranks on the sets $\lbf$ and $\llf$. So let
$X$ be a Polish space, $\bs$ a sequence relatively compact in
$\mathcal{B}_1(X)$ and $f$ an accumulation point of $(f_n)_n$.
As the sets $\lbf$ and $\llf$ do not depend on the topology on
$X$, we may assume, by enlarging the topology of $X$ if necessary,
that the functions $(f_n)_n$ and the function $f$ are continuous
(see \cite{Kechris}). We need to deal with decreasing sequences
of closed subsets of $X$ \`{a} la Cantor. We fix a countable dense
subset $D$ of $X$. Let $(B_n)_n$ be an enumeration of all closed
balls in $X$ (taken with respect to some compatible complete metric)
with centers in $D$ and rational radii. If $X$ happens to be
locally compact, we will assume that every ball $B_n$ is compact.
We will say that a finite sequence $w=(l_0,...,l_k)\in\bt$
is acceptable if
\begin{enumerate}
\item[(i)] $B_{l_0}\supseteq B_{l_1}\supseteq ... \supseteq B_{l_k}$, and
\item[(ii)] $\text{diam}(B_{l_i})\leq\frac{1}{i+1}$ for all $i=0,...,k$.
\end{enumerate}
By convention $(\varnothing)$ is acceptable. Notice that if $w_1\sqsubset w_2$
and $w_2$ is acceptable, then $w_1$ is acceptable too. We will also need the
following notations.
\begin{notation}
By $\fin$ we denote the set of all finite subsets of $\nn$. For every
$F,G\in \fin$ we write $F<G$ if $\max\{n: n\in F\}<\min\{n:n\in G\}$.
For every $L\in [\nn]$, by $\fin(L)$ we denote the set of all finite
subsets of $L$. Finally, by $[\fin(L)]^{<\nn}$ we denote the set
of all finite sequences $t=(F_0,...,F_k)\in \big(\fin(L)\big)^{<\nn}$
which are increasing, i.e. $F_0<F_1<...<F_k$.
\end{notation}
The construction of the $\PB^1_1$-ranks on $\lbf$ and $\llf$ will be
done by finding appropriate reductions of the sets in question to
well-founded trees. In particular, we shall construct the following.
\begin{enumerate}
\item[(C1)] A continuous map $[\nn]\ni L\mapsto T_L\in \tr(\nn\times
\fin\times\nn)$, and
\item[(C2)] a continuous map $[\nn]\ni L\mapsto S_L\in \tr(\nn\times\nn)$
\end{enumerate}
such that
\begin{enumerate}
\item[(C3)] $L\in\lbf$ if and only if $T_L\in\wf(\nn\times\fin\times\nn)$, and
\item[(C4)] $L\in\llf$ if and only if $S_L\in\wf(\nn\times\nn)$.
\end{enumerate}
It follows by (C1)-(C4) above, that the maps $L\to o(T_L)$ and $L\to o(S_L)$
are $\PB^1_1$-ranks on $\lbf$ and $\llf$.
\bigskip

\noindent 1. \textit{The reduction of $\lbf$ to $\wf(\nn\times\fin\times\nn)$}.
Let $d\in\nn$. For every $L\in[\nn]$ we associate a tree $T^d_L\in\tr(\nn
\times\fin\times\nn)$ as follows. We let
\begin{eqnarray*}
T^d_L= \big\{ (s,t,w) & : & \exists k \text{ with } |s|=|t|=|w|=k, \\
& & s=(n_0<...<n_{k-1})\in [L]^{<\nn}, \\
& & t=(F_0<...<F_{k-1})\in [\fin(L)]^{<\nn},  \\
& & w=(l_0,...,l_{k-1})\in\bt \text{ is acceptable and} \\
& & \forall  0\leq i\leq k-1 \ \forall z\in B_{l_i} \text{ there
exists } m_i\in F_i \text{ with } \\
& & |f_{n_i}(z)-f_{m_i}(z)|>\frac{1}{d+1}\big\}.
\end{eqnarray*}
Next we glue the sequence of trees $(T^d_L)_{d\in\nn}$ in a natural way
and we build a tree $T_L\in\tr(\nn\times\fin\times\nn)$ defined by the rule
\begin{eqnarray*}
(s,t,w)\in T_L & \Leftrightarrow & \exists d \ \exists (s',t',w') \text{ such that }
(s',t',w')\in T^d_L \text{ and}\\
& & s=d^\con s', t=\{d\}^\con t', w=d^\con w'.
\end{eqnarray*}
It is clear that the map $[\nn]\ni L\mapsto T_L\in\tr(\nn\times\fin\times\nn)$
is continuous. Moreover the following holds.
\begin{lem}
\label{l1} Let $L\in[\nn]$. Then $L\in\lbf$ if and only if
$T_L\in\wf(\nn\times\fin\times\nn)$.
\end{lem}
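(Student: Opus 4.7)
The plan is to first reduce the problem to the layer trees $T_L^d$. Every non-root node of $T_L$ begins with a first coordinate $d \in \nn$, and the subtree above the root rooted at that coordinate is (a shifted copy of) $T_L^d$; hence $T_L \in \wf(\nn\times\fin\times\nn)$ if and only if $T_L^d \in \wf(\nn\times\fin\times\nn)$ for every $d \in \nn$. The task thus reduces to proving that $L \notin \lbf$ if and only if there exists some $d$ for which $T_L^d$ is ill-founded.

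For the direction "$T_L^d$ has an infinite branch implies $L \notin \lbf$", I would take such a branch $\bigl((n_i)_i, (F_i)_i, (l_i)_i\bigr)$. The closed balls $B_{l_i}$ are nested with $\mathrm{diam}(B_{l_i}) \leq 1/(i+1)$, so completeness of the compatible metric on $X$ forces $\bigcap_i B_{l_i}$ to be a single point $x$. The defining condition of a node, applied at $z = x$, produces for each $i$ some $m_i \in F_i$ with $|f_{n_i}(x) - f_{m_i}(x)| > 1/(d+1)$. Both sequences $(n_i)$ and $(m_i)$ tend to $\infty$ -- the former because $s$ is strictly increasing, the latter because the constraint $\max F_i < \min F_{i+1}$ forces $\min F_i \to \infty$ -- so $(f_n(x))_{n \in L}$ fails the Cauchy condition at $x$, hence $L \notin \lbf$.

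For the converse, fix $x \in X$ at which $(f_n(x))_{n \in L}$ diverges; since $\rr$ is complete, the sequence is not Cauchy, so pick $\epsilon > 0$ such that arbitrarily large pairs $n, m \in L$ satisfy $|f_n(x) - f_m(x)| > \epsilon$, and choose $d$ with $1/(d+1) < \epsilon$. I build an infinite branch of $T_L^d$ by recursion on $i$. Given $n_0 < \dots < n_{i-1}$ and $F_0 < \dots < F_{i-1}$ in $L$, together with nested acceptable balls $B_{l_0} \supseteq \dots \supseteq B_{l_{i-1}}$ all containing $x$ in their interior, set $N = \max(n_{i-1}, \max F_{i-1}) + 1$ and pick $n_i < m_i$ in $L$, both $\geq N$, with $|f_{n_i}(x) - f_{m_i}(x)| > \epsilon$. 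By continuity (we may assume each $f_n$ is continuous by enlarging the Polish topology on $X$), the set $U_i = \{z : |f_{n_i}(z) - f_{m_i}(z)| > 1/(d+1)\}$ is an open neighborhood of $x$; density of $D$ and rationality of radii then furnish a closed ball $B_{l_i}$ from the enumeration, with $x$ in its interior, $B_{l_i} \subseteq B_{l_{i-1}} \cap U_i$, and $\mathrm{diam}(B_{l_i}) \leq 1/(i+1)$. Setting $F_i = \{m_i\}$ completes the step.

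The one place requiring care is coordinating the three increasing conditions on $s$, $t$, $w$ across the recursive steps. Choosing $F_i$ to be the singleton $\{m_i\}$ and demanding both $n_i, m_i > \max(n_{i-1}, \max F_{i-1})$ makes the conditions on $s$ and $t$ automatic, while the geometric condition on $w$ is handled by the standard choice of a small closed ball from the enumeration; no further device is needed.
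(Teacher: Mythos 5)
Your proof is correct and follows essentially the same route as the paper: reduce to the layers $T^d_L$, read off a point of non-convergence from the nested balls along an infinite branch, and conversely build a branch from a point $x$ where $(f_n(x))_{n\in L}$ fails to converge, using continuity to shrink the balls. The only (harmless) divergence is in the converse direction, where the paper first extracts two disjoint convergent subsequences $L_1,L_2\in[L]$ with distinct limits (via cofinality of $\lbf$, i.e.\ Rosenthal's theorem) and pairs their enumerations, whereas you obtain the witnessing pairs $n_i<m_i$ directly from the negation of the Cauchy condition at $x$ --- a slightly more self-contained variant of the same construction.
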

\begin{proof}
First, notice that if $L\notin\lbf$, then there exist $L_1, L_2\in[L]$
such that $L_1\cap L_2=\varnothing$, $L_1, L_2\in\lbf$ and
$f_{L_1}\neq f_{L_2}$, where as usual $f_{L_1}$ and $f_{L_2}$
are the pointwise limits of the sequences $(f_n)_{n\in L_1}$ and
$(f_n)_{n\in L_2}$ respectively. Pick $x\in X$ and $d\in\nn$ such that
$|f_{L_1}(x)-f_{L_2}(x)|>\frac{1}{d+1}$. Clearly we may assume that
$|f_n(x)-f_m(x)|>\frac{1}{d+1}$ for every $n\in L_1$ and every $m\in L_2$.
Let $L_1=\{n_0<n_1<...\}$ and $L_2=\{m_0<m_1<...\}$ be the increasing
enumerations of $L_1$ and $L_2$. Using the continuity of the functions
$(f_n)_n$, we find  $w=(l_0,...,l_k,...)\in\nn^\nn$ such that $w|k$ is
acceptable for all $k\in\nn$, $\bigcap_k B_{l_k}=\{x\}$ and $|f_{n_k}(z)-
f_{m_k}(z)|>\frac{1}{d+1}$ for all $k\in\nn$ and $z\in B_{l_k}$.
Then $\big((n_0,...,n_k),(\{m_0\},...,\{m_k\}), w|k\big)\in T^d_L$ for all
$k\in\nn$, which shows that $T_L\notin \wf(\nn\times\fin\times\nn)$.

Conversely assume that $T_L$ is not well-founded. There exists $d\in\nn$
such that $T^d_L$ is not well-founded too. Let $\big((s_k,t_k,w_k)\big)_k$
be an infinite branch of $T^d_L$. Let $N=\bigcup_k s_k=\{n_0<...<n_k<...\}
\in [L]$, $\mathcal{F}=\bigcup_k t_k=(F_0<...<F_k<...)\in \fin(L)^\nn$ and
$w=\bigcup_k w_k=(l_0,...,l_k,...)\in \nn^\nn$. By the definition of $T^d_L$,
we get that $\bigcap_k B_{l_k}=\{x\}\in X$ and that for every $k\in\nn$
there exists $m_k\in F_k\subseteq L$ with $|f_{n_k}(x)-f_{m_k}(x)|>\frac{1}{d+1}$.
As $F_i<F_j$ for all $i<j$, we see that $m_i<m_j$ if $i<j$. It follows that
$M=\{m_0<...<m_k<...\}\in [L]$. Thus, the sequence $\big( f_n(x)\big)_{n\in L}$
is not Cauchy and so $L\notin \lbf$, as desired.
\end{proof}
By Lemma \ref{l1}, the reduction of $\lbf$ to $\wf(\nn\times\fin\times\nn)$
is constructed. Notice that for every $L\in\lbf$ and every $d_1\leq d_2$
we have $o(T^{d_1}_L)\leq o(T^{d_2}_L)$ and moreover
$o(T_L)=\sup\{ o(T^d_L):d\in\nn\}+1$.
\begin{rem}
We should point out that the reason why in the definition of
$T^d_L$ the node $t$ is a finite sequence of finite sets rather
than natural numbers, is to get the estimate in Proposition
\ref{newp3} below. Having natural numbers instead of finite
sets would also lead to a canonical rank.
\end{rem}

\noindent 2. \textit{The reduction of $\llf$ to $\wf(\nn\times\nn)$}.
The reduction is similar to that of the previous step, and so, we shall
indicate only the necessary changes. Let $d\in\nn$. As before, for every
$L\in[\nn]$ we associate a tree $S^d_L\in\tr(\nn\times\nn)$ as follows.
We let
\begin{eqnarray*}
S^d_L= \big\{ (s,w) & : & \exists k\in\nn \text{ with } |s|=|w|=k, \\
& & s=(n_0<...<n_{k-1})\in [L]^{<\nn},  \\
& & w=(l_0,...,l_{k-1})\in\bt \text{ is acceptable and} \\
& & \forall 0\leq i\leq k-1 \ \forall z\in B_{l_i} \text{ we have }
|f_{n_i}(z)-f(z)|>\frac{1}{d+1} \big\}.
\end{eqnarray*}
Next we glue the sequence of trees $(S^d_L)_{d\in\nn}$ as we did with the
sequence $(T^d_L)_{d\in\nn}$ and we build a tree $S_L\in\tr(\nn\times\nn)$
defined by the rule
\begin{eqnarray*}
(s,w)\in S_L & \Leftrightarrow & \exists d \ \exists (s',w') \text{ such that }
(s',w')\in T^d_L \text{ and}\\
& & s=d^\con s', w=d^\con w'.
\end{eqnarray*}
Again it is easy to check that the map $[\nn]\ni L\mapsto S_L\in\tr(\nn
\times\nn)$ is continuous. Moreover we have the following analogue of Lemma
\ref{l1}. The proof is identical and is left to the reader.
\begin{lem}
\label{l2} Let $L\in[\nn]$. Then $L\in\llf$ if and only if $S_L\in\wf(\nn\times\nn)$.
\end{lem}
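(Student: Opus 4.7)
The plan is to mirror the proof of Lemma \ref{l1} essentially verbatim, exploiting that $S^d_L$ is a simplification of $T^d_L$: the finite-set coordinate $t$ in $T^d_L$ (which in the $\lbf$ case witnessed a second subsequence converging to a different limit at the point $x$) is replaced here by the fixed reference function $f$, so no separate coordinate is needed. The two directions of the equivalence then parallel the two halves of Lemma \ref{l1}.

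First I would handle $L\notin\llf \Rightarrow S_L\notin\wf(\nn\times\nn)$. If $(f_n)_{n\in L}$ does not converge pointwise to $f$, then there exist $x\in X$, $d\in\nn$, and an infinite subsequence $\{n_0<n_1<\cdots\}\subseteq L$ with $|f_{n_k}(x)-f(x)|>\frac{1}{d+1}$ for every $k$. Using the standing assumption that each $f_n$ and $f$ is continuous (obtained by enlarging the topology on $X$ as in the paragraph preceding the construction), I choose inductively acceptable balls $B_{l_0}\supseteq B_{l_1}\supseteq\cdots$ containing $x$, with centers in $D$, rational radii, $\mathrm{diam}(B_{l_k})\leq\frac{1}{k+1}$, and $|f_{n_k}(z)-f(z)|>\frac{1}{d+1}$ for all $z\in B_{l_k}$. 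Then $\bigl((n_0,\ldots,n_{k-1}),(l_0,\ldots,l_{k-1})\bigr)_{k\geq 1}$ is an infinite branch of $S^d_L\subseteq S_L$, so $S_L$ is ill-founded.

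Conversely, if $S_L\notin\wf(\nn\times\nn)$, then some $S^d_L$ admits an infinite branch $((s_k,w_k))_k$. Setting $\bigcup_k s_k=\{n_0<n_1<\cdots\}\subseteq L$ and $\bigcup_k w_k=(l_0,l_1,\ldots)$, acceptability of every $w_k$ together with the diameter bound and completeness of the metric on $X$ forces $\bigcap_k B_{l_k}=\{x\}$ for some $x\in X$. Since $x\in B_{l_k}$ for each $k$, the definition of $S^d_L$ gives $|f_{n_k}(x)-f(x)|>\frac{1}{d+1}$ for every $k$, so $(f_n(x))_{n\in L}$ fails to converge to $f(x)$, and $L\notin\llf$.

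The argument is routine; the only step requiring any care is the simultaneous choice of the $B_{l_k}$ in the forward direction, where one uses continuity of both $f$ and the $f_{n_k}$ to upgrade the pointwise inequality $|f_{n_k}(x)-f(x)|>\frac{1}{d+1}$ to the uniform inequality on a whole ball, and the density of $D$ together with rationality of the radii to ensure each $l_k$ indexes a member of the fixed enumeration $(B_n)_n$.
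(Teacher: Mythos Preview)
Your proposal is correct and is precisely what the paper intends: the paper does not spell out a proof of Lemma \ref{l2} at all, stating only that ``the proof is identical and is left to the reader,'' and your argument is exactly the straightforward adaptation of the proof of Lemma \ref{l1} that this remark invites.
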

This gives us the reduction of $\llf$ to $\wf(\nn\times\nn)$. As before we have
$o(S_L)=\sup\{ o(S^d_L):d\in\nn\}+1$ for every $L\in\llf$.

We proceed now to discuss the question whether for a given $L\in\llf$
we can bound the order of the tree $S_L$ by the order of $T_L$. The
following example shows that this is not in general possible.
\begin{examp}
\label{e2} Let $A(2^\nn)=\{\delta_\sg:\sg\in 2^\nn\}\cup \{0\}$ be the
one point compactification of $2^\nn$. This is not a separable Rosenthal
compact, but it can be supplemented to one in a standard way (see
\cite{Pol}, \cite{Ma}, \cite{To}). Specifically, let $(s_n)_n$
be the enumeration of the Cantor tree $\ct$ as in Example \ref{e1}.
For every $n\in\nn$, let $f_n=\chi_{V_{s_n}}$, where $V_{s_n}=
\{\sg\in 2^\nn: s_n\sqsubset \sg\}$. Then $A(2^\nn)\cup\{ f_n\}_n$
is a separable Rosenthal compact. Now let $A$ be a $\SB^1_1$ non-Borel
subset of $2^\nn$. Following \cite{Pol2} (see also \cite{Ma}), let
$\kk_A$ be the separable Rosenthal compact obtained by restricting
every function in $A(2^\nn)\cup\{f_n\}_n$ on $A$. The sequence
$\mathbf{f}_A=(f_n|_A)_n$ is a countable dense subset of $\kk_A$
consisting of continuous functions and $0\in \kk_A$ is a non-$G_\delta$
point (and obviously continuous). Consider the sets
\[ \lbf^A=\{ L\in [\nn]: (f_n|_A)_{n\in L} \text{ is pointwise convergent on } A\}\]
and
\[ \mathcal{L}^A_{\mathbf{f},0}=\{ L\in [\nn]: (f_n|_A)_{n\in L} \text{ is
pointwise convergent to } 0 \text{ on } A\}.\]
Let $\phi$ be a $\PB^1_1$-rank on $\lbf^A$ and $\psi$ a $\PB^1_1$-rank on
$\mathcal{L}^A_{\mathbf{f},0}$. We claim that there does not exist a map
$\Phi:\omega_1\to\omega_1$ such that $\psi(L)\leq \Phi\big(\phi(L)\big)$
for all $L\in\mathcal{L}^A_{\mathbf{f},0}$. Assume not. Let
\[ R=\{ L\in [\nn]: \exists \sg\in 2^\nn \text{ with } s_n\sqsubset \sg
\ \forall n\in L\}.\]
Then $R$ is a closed subset of $\lbf^A$. For every $L\in R$, let
$\sg_L=\bigcup_{n\in L} s_n\in 2^\nn$. The map $R\ni L \mapsto
\sg_L\in 2^\nn$ is clearly continuous. Observe that for every $L\in R$
we have that $L\in \mathcal{L}^A_{\mathbf{f},0}$ if and only if
$\sg_L\notin A$. As $R$ is a Borel subset of $\lbf^A$, by boundedness
we get that $\sup\{ \phi(L):L\in R\}=\xi<\omega_1$. Let
$\zeta=\sup\{ \Phi(\lambda): \lambda\leq \xi\}$. The set
$B=R\cap \{L\in \mathcal{L}^A_{\mathbf{f},0}: \psi(L)\leq\zeta\}$ is Borel
and $B=R\cap \mathcal{L}^A_{\mathbf{f},0}$. Hence, the set
$\Sigma_B=\{\sg_L:L\in B\}$ is an analytic subset of $2^\nn\setminus A$.
As $2^\nn\setminus A$ is $\PB^1_1$-true, there exists
$\sg_0\in 2^\nn\setminus A$ with $\sg_0\notin \Sigma_B$.
Pick $L\in R$ with $\sg_L=\sg_0$. Then $L\in B$ yet $\sg_L\notin
\Sigma_B$, a contradiction.
\end{examp}
Although we cannot, in general, bound the order of the tree $S_L$
by that of $T_L$, the following proposition shows that this is
possible for an important special case.
\begin{prop}
\label{newp3} Let $X$ be locally compact, $\kk$ a separable Rosenthal
compact on $X$, $\bs$ a dense sequence in $\kk$ consisting of continuous
functions and $f\in \kk$. If $f$ is continuous, then $o(S_L)\leq o(T_L)$
for all $L\in \llf$.

In particular, there exists a $\PB^1_1$-rank $\phi$ on $\lbf$ and a
$\PB^1_1$-rank $\psi$ on $\llf$ with $\psi(L)\leq\phi(L)$ for all
$L\in\llf$.
\end{prop}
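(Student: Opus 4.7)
The plan is to exhibit, for each $L\in\llf$, a monotone map $\Phi_L\colon S_L\to T_L$. Since a monotone map between well-founded trees satisfies $o(S)\leq o(T)$ (as recorded in Section 2), this immediately yields $o(S_L)\leq o(T_L)$. The final sentence of the proposition is then automatic: by Lemmas \ref{l1} and \ref{l2} combined with Theorem \ref{tp}-style reasoning, the continuous reductions $L\mapsto T_L$ and $L\mapsto S_L$ pull back the canonical $\PB^1_1$-rank on $\wf$ to yield $\PB^1_1$-ranks $\phi(L)=o(T_L)$ on $\lbf$ and $\psi(L)=o(S_L)$ on $\llf$, with $\psi\leq\phi$ on $\llf$.

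The core construction is the following. Fix $L\in\llf$ and send the root of $S_L$ to the root of $T_L$, sending the first-level node $(d)$ to $(2d+1)$. Below this, given $(s,w)=\bigl((n_0,\dots,n_{k-1}),(l_0,\dots,l_{k-1})\bigr)\in S^d_L$, we produce $t=(F_0,\dots,F_{k-1})\in[\fin(L)]^{<\nn}$ so that $(s,t,w)\in T^{2d+1}_L$. The required $F_i$'s are chosen recursively to satisfy: for every $0\leq i\leq k-1$ and every $z\in B_{l_i}$ there is some $m\in F_i$ with $|f_m(z)-f(z)|<\frac{1}{2(d+1)}$. Combining this with the defining inequality of $S^d_L$, namely $|f_{n_i}(z)-f(z)|>\frac{1}{d+1}$ on $B_{l_i}$, the triangle inequality gives $|f_{n_i}(z)-f_m(z)|>\frac{1}{2(d+1)}=\frac{1}{(2d+1)+1}$, which is exactly the condition placing $(s,t,w)$ in $T^{2d+1}_L$.

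The existence of each $F_i$ uses local compactness of $X$ in an essential way: by the convention at the start of the section each $B_{l_i}$ is compact, and since $L\in\llf$ the sequence $(f_m)_{m\in L}$ converges pointwise to $f$ on $X$. Thus for every $z\in B_{l_i}$ one picks $m(z)\in L$ with $m(z)>\max F_{i-1}$ and $|f_{m(z)}(z)-f(z)|<\frac{1}{2(d+1)}$; continuity of $f_{m(z)}$ and $f$ (both continuous by the hypotheses) propagates the inequality to an open neighborhood $U_z$ of $z$, and compactness of $B_{l_i}$ extracts a finite subcover $U_{z_1},\dots,U_{z_p}$, after which $F_i=\{m(z_1),\dots,m(z_p)\}$ works.

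The one delicate point — and the main obstacle — is monotonicity: a careless choice of $F_i$ could depend on coordinates of $(s,w)$ beyond position $i$, destroying the coherence of $\Phi_L$ under extension. This is resolved by making the choice canonical: fix once and for all an enumeration of $\fin$, and declare $F_i$ to be the first element in this enumeration that is contained in $L$, satisfies $\min F_i>\max F_{i-1}$ (with the convention $\max F_{-1}=-1$), and has the required covering property on $B_{l_i}$ with slack $\frac{1}{2(d+1)}$. The previous paragraph guarantees that such an $F_i$ always exists, and by construction $F_i$ depends only on $d$, $l_i$, $L$, and $F_{i-1}$. Consequently, extending $(s,w)$ in $S^d_L$ merely appends a new $F_k$ without altering the earlier $F_j$, so the assignment $(s,w)\mapsto(s,t,w)$ is monotone, and hence so is $\Phi_L\colon S_L\to T_L$. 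This completes the proof.
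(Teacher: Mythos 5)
Your proof is correct and follows essentially the same route as the paper: a level-by-level construction of a monotone map from $S^d_L$ into the trees $T^{d'}_L$, producing each $F_i$ by pointwise convergence along $L$, continuity, and compactness of the balls $B_{l_i}$, and then gluing over $d$. The only (harmless) differences are that you land in $T^{2d+1}_L$ via a triangle-inequality slack where the paper exploits the strictness of $|f_{n_i}(z)-f(z)|>\frac{1}{d+1}$ to choose $m_z$ with $|f_{n_i}(z)-f_{m_z}(z)|>\frac{1}{d+1}$ and stay in $T^d_L$, and that you enforce coherence by a canonical choice of $F_i$ where the paper gets it for free by defining $\Phi$ recursively on length as the value at the immediate predecessor concatenated with the new $F_k$.
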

\begin{proof}
We will show that for every $d\in\nn$ we have $o(S^d_L)\leq o(T^d_L)$
for every $L\in\llf$. This clearly completes the proof. So fix $d\in\nn$
and $L\in\llf$. We shall construct a monotone map $\Phi:S^d_L\to
[\fin(L)]^{<\nn}$ such that for every $(s,w)\in S^d_L$ the following hold.
\begin{enumerate}
\item[(i)] $|(s,w)|=|\Phi\big((s,w)\big)|$.
\item[(ii)] If $s=(n_0<...<n_k)$, $w=(l_0,...,l_k)$ and $\Phi\big((s,w)\big)=
(F_0<...<F_k)$, then for every $i\in\{0,...,k\}$ and every $z\in B_{l_i}$
there exists $m_i\in F_i$ with $|f_{n_i}(z)-f_{m_i}(z)|>\frac{1}{d+1}$.
\end{enumerate}
Assuming that $\Phi$ has been constructed, let $M:S^d_L\to T^d_L$ be
defined by
\[ M\big( (s,w)\big)= \big(s,\Phi\big((s,w)\big), w\big). \]
Then it is easy to see that $M$ is a well-defined monotone map, and so,
$o(S^d_L)\leq o(T^d_L)$ as desired.

We proceed to the construction of $\Phi$. It will be constructed by
recursion on the length of $(s,w)$. We set $\Phi\big((\varnothing,
\varnothing)\big)=(\varnothing)$. Let $k\in\nn$ and assume that
$\Phi\big((s,w)\big)$ has been defined for every $(s,w)\in S^d_L$
with $|(s,w)|\leq k$. Let $(s',w')=(s^{\con}n_k, w^{\con}l_k)
\in S^d_L$ with $|s'|=|w'|=k+1$. By the definition of $S^d_L$,
we have that $|f_{n_k}(z)-f(z)|>\frac{1}{d}$ for every
$z\in B_{l_k}$. Put $p=\max\big\{n: n\in F \text{ and }
F\in \Phi\big((s,w)\big)\big\}\in\nn$. For every $z\in B_{l_k}$
we may select $m_z\in L$ with $m_z>p$ and such that
$|f_{n_k}(z)-f_{m_z}(z)|>\frac{1}{d+1}$. As the functions
$(f_n)_n$ are continuous, we pick  an open neighborhood $U_z$ of
$z$ such that $|f_{n_k}(y)-f_{m_z}(y)|>\frac{1}{d+1}$
for all $y\in U_z$. By the compactness of $B_{l_k}$,
there exists $z_0,...,z_{j_k}\in B_{l_k}$ such that
$U_{z_0}\cup ...\cup U_{z_{j_k}} \supseteq B_{l_k}$.
Let $F_k=\{ m_{z_i}:i=0,...,j_k\}\in \fin(L)$ and notice
that $F\leq p<F_k$ for every $F\in \Phi\big((s,w)\big)$. We set
\[ \Phi\big((s',w')\big)=\Phi\big((s,w)\big)^{\con} F_k
\in [\fin(L)]^{<\nn}. \]
It is easy to check that $\Phi\big((s',w')\big)$ satisfies (i)
and (ii) above. The proof is completed.
\end{proof}


\end{document}